\documentclass[preprint,12pt]{elsarticle}

\usepackage{amscd}
\usepackage{amsmath}
\usepackage{amsthm}
\usepackage{amsfonts}
\usepackage{graphicx}
\usepackage{amssymb}
\usepackage{color}
\usepackage{amsbsy}
\usepackage{graphicx}
\usepackage{amssymb,color,amsbsy}%, amsthm}
\usepackage{float}
\usepackage{comment}

\usepackage[ruled]{algorithm2e}

\usepackage[matrix,arrow]{xy}
\usepackage{mathabx}   
\DeclareMathAlphabet{\mathpzc}{OT1}{pzc}{m}{it}

\usepackage{pgfpages}
\usepackage{tikz}
\usetikzlibrary{shapes.geometric}
\usetikzlibrary{decorations.pathmorphing}
\usetikzlibrary{arrows}
\usetikzlibrary{backgrounds}
\usetikzlibrary{positioning}
\usetikzlibrary{fit}
\usepackage{caption}
\usepackage{amsthm}

\usepackage{amsmath}

\usepackage[pagebackref=true, bookmarksopen=true,colorlinks=true, linkcolor=red,citecolor=blue]{hyperref}

\usepackage[capitalise]{cleveref}  %References will output ‘Theorem 3.21’, without you having to enter the “Theorem.”

\global\long\def\ii{\cap}%

\global\long\def\u{\cup}%

\global\long\def\I{\bigcap}%

\global\long\def\U{\bigcup}%

\global\long\def\s{\subset}%

\global\long\def\S{\subset}%

\global\long\def\p{\prime}%

\global\long\def\P{\prime}%

\global\long\def\sm{\sum}%

\global\long\def\ñ{\sim}%

\global\long\def\Le{\le}%

\usepackage{tikz-cd}

\usepackage{tkz-graph}
\usepackage{multicol}

\usepackage{subcaption}

\newtheorem{theorem}{Theorem}[section]

\newtheorem{conjecture}[theorem]{Conjecture}
\newtheorem{corollary}[theorem]{Corollary}

\newtheorem{lemma}[theorem]{Lemma}

\newtheorem{problem}[theorem]{Problem}
\newtheorem{proposition}[theorem]{Proposition}

\newtheorem{observation}[theorem]{Observation}
\usepackage{comment}
\begin{document}

\begin{abstract}
	
An almost bipartite graph is a graph with a unique odd cycle. Levit
and Mandrescu showed that in every non-König--Egerváry almost bipartite
graph the equalities $\textnormal{ker}(G)=\textnormal{core}(G)$, 
$\textnormal{corona}(G)\cup N(\textnormal{core}(G)) = V(G)$
and $\left|\textnormal{corona}(G)\right|+\left|\textnormal{core}(G)\right|=2\alpha(G)+1$
hold. In this work, we present a generalization of this theory by
introducing the family of $R$-disjoint graphs, which contains all
non-König--Egerváry almost bipartite graphs, allowing the presence of
multiple odd cycles under connectivity constraints based on
the reach sets $R(C)$. We prove that $R$-disjoint graphs
preserve the fundamental properties of almost bipartite graphs:
$\textnormal{ker}(G)=\textnormal{core}(G)$ and
$\textnormal{corona}(G)\cup N(\textnormal{core}(G))=V(G)$.
Moreover, we establish the formula 
$\left|\textnormal{corona}(G)\right|+\left|\textnormal{core}(G)\right|=2\alpha(G)+k$,
where $k$ is the number of disjoint odd cycles in $G$, 
which refines the previously known particular case when $k=1$. 
$R$-disjoint graphs naturally induce a canonical decomposition; 
we obtain structural properties of this decomposition and, 
as a consequence, verify a recent conjecture of Levit and Mandrescu.

\end{abstract}

\begin{keyword} 	Graph, Matching, almost, bipartite, König-Egerváry, corona 	\MSC 05C70, 05C75 \end{keyword}
\begin{frontmatter} 
	\title{On R-disjoint graphs: a generalization of almost bipartite non–König–Egerváry graphs}

	\author[pan,daj]{Kevin Pereyra} 	\ead{kdpereyra@unsl.edu.ar}

	\address[pan]{Universidad Nacional de San Luis, Argentina.} 	\address[daj]{IMASL-CONICET, Argentina.} 
	\date{Received: date / Accepted: date} 
	
\end{frontmatter} %

\section{Introduction}
If $\alpha(G)+\mu(G)$ equals the order of the graph $G$, then
$G$ is a König--Egerváry graph \cite{deming1979independence,gavril1977testing,stersoul1979characterization}.
Various properties of König--Egerváry graphs are presented in 
\cite{bourjolly2009node,jarden2017two,levit2006alpha,levit2012critical}.
It is known that every bipartite graph is a König--Egerváry graph
\cite{egervary1931combinatorial}.  

We define the sets $\Omega^{*}(G)=\left\{ S:S\text{ is an independent set of }G\right\}$,
$\Omega(G)=\left\{ S: \left|S\right|=\alpha(G) \right\}$, 
$\textnormal{core}(G)=\bigcap\left\{ S:S\in\Omega(G)\right\}$ \cite{levit2003alpha+},
and $\textnormal{corona}(G)=\bigcup\{ S:$ $S\in\Omega(G)\}$ \cite{boros2002number}.
The number $d_{G}(X)=\left|X\right|-\left|N(X)\right|$ is called the difference
of the set $X\s V(G)$, and $d(G)=\max\{d_{G}(X):X\s V(G)\}$ is called
the critical difference of $G$. A set $U\s V(G)$ is critical
if $d_{G}(U)=d(G)$ \cite{zhang1990finding}. The number 
$d_{I}(G)=\max\left\{ d_{G}(X):X\in\Omega^{*}(G)\right\}$ 
is called the critical independence difference of $G$. 
If a set $X\s \Omega^{*}(G)$ satisfies $d_{G}(X)=d_{I}(G)$, then it is called
a critical independent set \cite{zhang1990finding}. Clearly, $d(G)\ge d_{I}(G)$
holds for every graph. It is known that $d(G)=d_{I}(G)$ for every
graph \cite{zhang1990finding}. We define 
$\textnormal{ker}(G)=\bigcap\left\{ S:S\text{ is a critical independent set of }G\right\}$
\cite{levit2012vertices,lorentzen1966notes,schrijver2003combinatorial}.

It is known that $\textnormal{ker}(G)\s \textnormal{core}(G)$ for every graph 
\cite{levit2012vertices}, while equality holds in bipartite graphs 
\cite{levit2013critical}, and for unicyclic non-König--Egerváry graphs 
\cite{levit2011core}.  

A graph $G$ is almost bipartite if it has only one odd cycle. In
\cite{levit2025almost} it is shown that the equality between $\ker(G)$
and $\textnormal{core}(G)$ also holds for every almost bipartite 
non-König--Egerváry graph. That is, we have the following:

\begin{theorem}  [\cite{levit2025almost}\label{pp}]  
	If $G$ is an almost bipartite non-König--Egerváry graph, then 
	\[
	\ker(G)=\textnormal{core}(G).
	\]
\end{theorem}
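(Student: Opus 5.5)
Since $\ker(G)\s\textnormal{core}(G)$ holds for every graph \cite{levit2012vertices}, it suffices to prove $\textnormal{core}(G)\s\ker(G)$. The plan is to reduce to the bipartite case, where $\ker=\textnormal{core}$ is known \cite{levit2013critical}, by exploiting the non-König--Egerváry hypothesis.

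\textbf{Step 1: the unique odd cycle $C$ misses every maximum independent set in a controlled way.} Fix an edge $e=xy$ of the unique odd cycle $C$, so that $G-e$ is bipartite, hence König--Egerváry. Since $G$ is not König--Egerváry and deleting an edge changes $\alpha$ by at most one and cannot decrease $\mu$ by more than one, I expect to show that $\alpha(G-e)=\alpha(G)+1$ and $\mu(G-e)=\mu(G)$. The extra independent vertex then forces every maximum independent set of $G-e$ to contain both $x$ and $y$. Hence $x,y\in\textnormal{core}(G-e)=\ker(G-e)$. Moreover, $\Omega(G)$ consists of the sets $S\setminus\{x\}$ and $S\setminus\{y\}$ for $S\in\Omega(G-e)$, together possibly with further sets. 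I will aim to prove that
\[
\textnormal{core}(G)=\textnormal{core}(G-e)\setminus\{x,y\},
\]
and, repeating the argument over all edges of $C$, that $\textnormal{core}(G)\ii V(C)=\emptyset$ and $\textnormal{core}(G)=\textnormal{core}(G-e)\setminus N[V(C)]$ in a suitable sense.

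\textbf{Step 2: compare critical independent sets.} Let $K=\ker(G-e)$, which is a critical independent set of $G-e$ equal to its core. I would use Larson's characterization, namely that a critical independent set extends to a maximum independent set and that $X$ is critical iff there is a matching of $N(X)$ into $X$. From it I will show that $K\setminus\{x,y\}$ (or more carefully $K\setminus N[C]$, the part of $K$ lying in the bipartite "pendant" structure away from $C$) is a critical independent set of $G$. Then, using the bipartite case, I will show that every critical independent set of $G$ contains $\textnormal{core}(G)$. Concretely, take a critical independent set $X$ of $G$. Since $d(G)=d_I(G)$, any $X$ maximizing $d_G$ can be matched, via a matching of $N(X)$ into $X$, to a maximum matching. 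I then argue that $X$ avoids $V(C)$: otherwise one can build a matching showing $\alpha+\mu=n$ on a subgraph, which contradicts the non-König--Egerváry property via the decomposition $G=G[X\u N(X)]\oplus G-X-N(X)$, since the first part is König--Egerváry. Thus $X$ is also critical in $G-e$, and so $X\supseteq\ker(G-e)\supseteq\textnormal{core}(G)$.

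\textbf{Main obstacle.} The delicate point is the last argument: showing that every critical independent set of $G$ is disjoint from $N[V(C)]$, or at least critical in $G-e$, so that the identity $\ker(G-e)=\textnormal{core}(G-e)$ transfers. I would first try the known fact that for a critical independent set $X$, the graph $G-X-N(X)$ has $\alpha$ adding correctly, namely $\alpha(G)=|X|+\alpha(G-X-N(X))$ and the analogous additivity for $\mu$. If $C$ met $X\u N(X)$, then because $C$ is the only odd cycle, $G-X-N(X)$ would be bipartite and hence König--Egerváry, making $G$ König--Egerváry, which is a contradiction. This yields $V(C)\ii(X\u N(X))=\emptyset$, after which $d_G$ and $d_{G-e}$ agree on $X$ and the comparison with $G-e$ goes through.
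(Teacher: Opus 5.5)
Your argument fails at its last inference, ``thus $X$ is also critical in $G-e$, and so $X\supseteq\ker(G-e)$.'' This inference can never hold. In Step 1 you correctly showed $x,y\in\textnormal{core}(G-e)=\ker(G-e)$. In your last paragraph you correctly showed $V(C)\cap(X\cup N(X))=\emptyset$. So $X$ misses $x$ and $y$ and cannot contain $\ker(G-e)$. What goes wrong is this: $d_{G-e}(X)=d_G(X)=d(G)$, but deleting $e$ strictly raises the critical difference, so $X$ is not critical in $G-e$. Already for $G=K_3$ the only critical independent set is $\emptyset$, while $G-e=P_3$ has $d(P_3)=1$, attained only by $\{x,y\}$. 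Critical sets therefore do not transfer from $G$ to $G-e$, and the proposal never establishes $\textnormal{core}(G)\subseteq X$.

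The comparison with $G-e$ is also too weak on the core side: your Step 1 target $\textnormal{core}(G)=\textnormal{core}(G-e)\setminus\{x,y\}$ is false. Take the $5$-cycle $c_1c_2c_3c_4c_5$ plus the edges $c_1w$ and $wa$, with $e=c_1c_2$.
\begin{itemize}
\item This graph is almost bipartite and non-K\"onig--Egerv\'ary ($\alpha=\mu=3$, $n=7$).
\item $G-e$ is the path $c_2c_3c_4c_5c_1wa$, whose unique maximum independent set is $\{c_1,c_2,c_4,a\}$. Hence $\textnormal{core}(G-e)\setminus\{x,y\}=\{c_4,a\}$.
\item But $\{w,c_3,c_5\}\in\Omega(G)$, so $\textnormal{core}(G)=\emptyset$ (and $\ker(G)=\emptyset$, since $d(G)=0$).
\end{itemize}
Only the inclusion $\subseteq$ holds. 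The vertices $c_4$ and $a$ are excluded from the core only by maximum independent sets of $G$ that do not come from $\Omega(G-e)$, so no bookkeeping inside $G-e$ can detect this. Two smaller points: in the same example $\ker(G-e)\setminus\{x,y\}=\{c_4,a\}$ has $d_G=-1<d(G)$, so it is not critical in $G$ either. And the ``iff'' you attribute to Larson fails in the converse direction; a single leaf of $K_{1,3}$ is a counterexample.

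What survives is your correct observation that $C$ lies in $G'=G-X-N(X)$. This gives you three facts:
\begin{itemize}
\item Together with $\alpha(G)=|X|+\alpha(G')$, it yields $\textnormal{core}(G)\subseteq X\cup\textnormal{core}(G')$.
\item Criticality of $X$ forces $d(G')=0$.
\item $G'$ is again almost bipartite and non-K\"onig--Egerv\'ary.
\end{itemize}
So everything reduces to showing that such a $G'$ with $d(G')=0$ has $\textnormal{core}(G')=\emptyset$. This is the special case $d=0$ (where $\ker=\emptyset$) of the theorem itself, and it is the theorem's real content; the proposal contains no argument for it. That step needs the odd-cycle structure directly, for instance the $M$-flower with blossom $C$ that every maximum matching admits, or the Gallai--Edmonds decomposition. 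With it you must build, for each vertex, a maximum independent set avoiding that vertex. For reference, the paper does not prove this statement; it imports it from Levit and Mandrescu and uses it as a black box.
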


The previous result motivates our work. In \cref{smat}, we
define the family of $R$-disjoint graphs, which is a family
containing the almost bipartite non-König--Egerváry graphs. 
This family allows the existence of multiple odd cycles in a graph,
but restricts the way these cycles can be connected. We prove
that for any $R$-disjoint graph the equality $\ker(G)=\textnormal{core}(G)$ holds.

For every $R$-disjoint graph, we show that 
$\textnormal{corona}(G)\cup N(\textnormal{core}(G))=V(G)$,
while 
$\left|\textnormal{corona}(G)\right|+\left|\textnormal{core}(G)\right|=2\alpha(G)+k$,
where $k$ is the number of disjoint odd cycles in $G$. 
This refines some previously known results.  

\begin{theorem}
	[\label{levit}\cite{levit2025almost}] If $G$ is an almost bipartite
	non-König--Egerváry graph, then 
	\begin{itemize}
		\item $\ker(G)=\textnormal{core}(G),$
		\item $\left|\textnormal{corona}(G)\right|+\left|\textnormal{core}(G)\right|=2\alpha(G)+1,$
		\item $\textnormal{corona}(G)\cup N(\textnormal{core}(G))=V(G).$
	\end{itemize}
\end{theorem}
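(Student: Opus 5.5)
The first item is exactly \cref{pp}, so I take $\ker(G)=\textnormal{core}(G)$ as given and prove the other two items from it. Let $C$ be the unique odd cycle of $G$. Throughout I use the standard facts for arbitrary graphs: $\ker(G)$ is itself a critical independent set; every critical independent set extends to a maximum independent set; and (Larson) if $X$ is a critical independent set, then $G[X\cup N(X)]$ is König--Egerváry with a matching of $N(X)$ into $X$. I would also use
\[
\alpha(G)=\alpha(G[X\cup N(X)])+\alpha(G-X-N(X)),
\]
together with the additivity of core and corona over this split whenever $X$ is critical.

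Set $K=\ker(G)=\textnormal{core}(G)$ and $H=G-K-N(K)$. First I show that $C$ lies in $H$. The graph $G[K\cup N(K)]$ is König--Egerváry, and $G$ is not, and the only obstruction in $G$ is $C$. Concretely, if $C$ met $K\cup N(K)$, I would argue that $G$ itself would be König--Egerváry: by the decomposition, $G$ is König--Egerváry if and only if $H$ is, and if $C$ does not lie inside $H$ then $H$ is bipartite. So $H$ contains $C$ and is almost bipartite. Moreover $\ker(H)=\emptyset$, which gives $d(H)=0$ and hence $\textnormal{core}(H)=\emptyset$. To justify this, note that a nonempty critical set of $H$ could be added to $K$ to form a larger critical set of $G$; this contradicts the minimality of $\ker$ as the intersection of critical sets, via the known lattice property of critical sets.

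The main step, and the main obstacle, is to show that an almost bipartite non-König--Egerváry graph $H$ with $\textnormal{core}(H)=\emptyset$ satisfies $|\textnormal{corona}(H)|=2\alpha(H)+1$ and $\textnormal{corona}(H)=V(H)$. I would attack this by taking an edge $e=xy$ on $C$ such that $H-e$ is bipartite. Then $\alpha(H-e)=\alpha(H)+1$, since $H$ is not König--Egerváry while $H-e$ is and $\mu$ is unchanged for a suitable choice of $e$. From $d(H)=0$ I would use Hall's condition to get a perfect matching of $H-e$, or a matching missing exactly one vertex; this gives $|V(H)|=2\alpha(H)+1$. It then remains to show that every vertex lies in some maximum independent set of $H$. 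For this I would use the fact that in a bipartite graph with $d=0$ every vertex lies in a maximum independent set, applied to $H-e$: the maximum independent sets of $H-e$ avoiding the pair $\{x,y\}$, with one of $x,y$ deleted, are maximum independent sets of $H$. Checking that these cover every vertex is the delicate case analysis around $C$.

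Finally I assemble the pieces. Since
\[
\textnormal{corona}(G)=\textnormal{corona}(G[K\cup N(K)])\cup V(H),
\]
and in the König--Egerváry part the corona is exactly $K$ (because $N(K)$ is matched into $K$ and $|K|$ equals the independence number of that part), we get $\textnormal{corona}(G)=K\cup V(H)$. Hence $\textnormal{corona}(G)\cup N(K)=V(G)$. Also
\[
|\textnormal{corona}(G)|+|\textnormal{core}(G)|=2|K|+2\alpha(H)+1=2\alpha(G)+1,
\]
using $\alpha(G)=|K|+\alpha(H)$.
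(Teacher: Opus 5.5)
Your reduction to $H=G-K-N(K)$ (with $C\subseteq H$, $d(H)=0$, $\textnormal{core}(H)=\emptyset$) is essentially sound. The gap is the step you flag as the main obstacle, $\textnormal{corona}(H)=V(H)$: the mechanism you propose for it cannot work. Since $\alpha(H-e)=\alpha(H)+1$, every $S\in\Omega(H-e)$ contains \emph{both} $x$ and $y$; otherwise $S$ would be independent in $H$ of size $\alpha(H)+1$. This has three consequences:
\begin{itemize}
\item there are no maximum independent sets of $H-e$ avoiding $\{x,y\}$;
\item $x,y\in\textnormal{core}(H-e)$, so $d(H-e)>0$ (in fact $d(H-e)=1$), and the bipartite fact you invoke (which needs $d=0$) does not apply;
\item the sets $S-x$, $S-y$ with $S\in\Omega(H-e)$ cover exactly $\textnormal{corona}(H-e)$, which is disjoint from $N_{H-e}(\textnormal{core}(H-e))$.
\end{itemize}
In particular, the neighbour $x'\neq y$ of $x$ on $C$ is never covered. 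Already for $H=C_5$ and $e=12$, $\Omega(H-e)=\{\{1,2,4\}\}$ yields only $\{1,4\}$ and $\{2,4\}$, missing $3$ and $5$. Items two and three rest entirely on $\textnormal{corona}(H)=V(H)$ and $|V(H)|=2\alpha(H)+1$. The latter is also only gestured at: $H-e$ has odd order, so your Hall argument cannot produce a perfect matching. So the proof has a genuine hole at its centre.

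What is missing is a source of maximum independent sets of $H$ that do not come from a fixed $H-e$. One route uses the Gallai--Edmonds structure of $H$. We have $V(C)\subseteq D(H)$, and $C$ is a whole component of $H[D(H)]$ (cf. \cref{as2} and \cref{lovaszk}). The remaining components are singletons forming an independent set $I$ with $N(I)\subseteq A(H)$. Then $d(H)=0$ gives $|I|\le|A(H)|$, and the $M$-flower gives deficiency $|I|+1-|A(H)|\ge 1$. So $|I|=|A(H)|$, and Hall yields a perfect matching $M_0$ between $I$ and $A(H)$. Now count over the $M_0$-edges, $V(C)$, and the perfectly matchable bipartite graph $H[C(H)]$. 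This gives $\alpha(H)=|A(H)|+\frac{|V(C)|-1}{2}+\frac{|C(H)|}{2}$, hence $|V(H)|=2\alpha(H)+1$. It also shows that every $S\in\Omega(H)$ contains exactly one end of each $M_0$-edge. Explicit sets cover $I\cup V(C)\cup C(H)$. For $a\in A(H)$, $\textnormal{core}(H)=\emptyset$ gives some $S\in\Omega(H)$ missing $M_0(a)$, and that $S$ therefore contains $a$.

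Three smaller repairs are also needed:
\begin{itemize}
\item $d(H)=0$ follows from $d_G(K\cup X)\ge d(G)+d_H(X)$ for independent $X\subseteq V(H)$, not from minimality of $\ker$.
\item $\textnormal{core}(H)=\emptyset$ does not follow from $d(H)=0$ in general. It does follow from $\textnormal{core}(H)\subseteq\textnormal{core}(G)\cap V(H)=\emptyset$.
\item $\textnormal{corona}(G)\cap N(K)=\emptyset$ holds because $K=\textnormal{core}(G)$ lies in every maximum independent set, not merely because $N(K)$ is matched into $K$.
\end{itemize}
Finally, the paper gives no proof to compare with. It quotes this theorem from \cite{levit2025almost} and uses it as an input in \cref{inder} and \cref{main3}.
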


The paper is organized as follows: in \cref{spre} we present preliminary
results and notation used throughout this work. In \cref{smat}, 
we define $R$-disjoint graphs and study the structure of matchings 
in these graphs. In \cref{sind}, we study independent sets in this
class of graphs and prove the main results of the paper. These results
provide a partial answer to Problem 3.1 posed in \cite{levit2025almost}, 
extending the validity of the equality $\ker(G)=\textnormal{core}(G)$ 
to a family of graphs with multiple odd cycles. In \cref{scon}, 
we prove structural results in $R$-disjoint graphs, 
including a result motivated by a conjecture of Levit and Mandrescu 
in \cite{levit2025almost}. As a particular case of this result 
(applied to an almost bipartite non-König--Egerváry graph), 
the conjecture is verified.

\section{Preliminaries}\label{spre}
All graphs considered in this paper are finite, undirected, and simple.
For any undefined terminology or notation, we refer the reader to
Lovász and Plummer \cite{LP} or Diestel \cite{Distel}.

Let \( G = (V, E) \) be a simple graph, where \( V = V(G) \) is the finite set of vertices and \( E = E(G) \) is the set of edges, with \( E \subseteq \{\{u, v\} : u, v \in V, u \neq v\} \). We denote the edge \( e=\{u, v\} \) as \( uv \). A subgraph of \( G \) is a graph \( H \) such that \( V(H) \subseteq V(G) \) and \( E(H) \subseteq E(G) \). A subgraph \( H \) of \( G \) is called a \textit{spanning} subgraph if \( V(H) = V(G) \). 

Let \( e \in E(G) \) and \( v \in V(G) \). We define \( G - e := (V, E \setminus \{e\}) \) and \( G - v := (V \setminus \{v\}, \{uw \in E : u,w \neq v\}) \). If \( X \subseteq V(G) \), the \textit{induced} subgraph of \( G \) by \( X \) is the subgraph \( G[X]=(X,F) \), where \( F:=\{uv \in E(G) : u, v \in X\} \).

Given a vertex set $S \subseteq V(G)$, we denote by $\partial(S)$
the set of edges with one endpoint in $S$ and the other in $V(G)\setminus S$.
We also denote by $\partial(S)$ the set of vertices in $S$
that are incident to an edge with the other endpoint in \textbf{$V(G)\setminus S$}.

A \textit{matching} \(M\) in a graph \(G\) is a set of pairwise non-adjacent edges. The \textit{matching number} of \(G\), denoted by  \(\mu(G)\), is the maximum cardinality of any matching in \(G\). Matchings induce an involution on the vertex set of the graph: \(M:V(G)\rightarrow V(G)\), where \(M(v)=u\) if \(uv \in M\), and \(M(v)=v\) otherwise. If \(S, U \subseteq V(G)\) with \(S \cap U = \emptyset\), we say that \(M\) is a matching from \(S\) to \(U\) if \(M(S) \subseteq U\).

A matching $M$ is \emph{perfect} if $M(v)\neq v$ for every vertex
of the graph. A graph $G$ is \textit{factor-critical} if $G-v$ has
a perfect matching for every vertex $v$. 

A vertex set \( S \subseteq V \) is \textit{independent} if, for every pair of vertices \( u, v \in S \), we have \( uv \notin E \). The number of vertices in a maximum independent set is denoted by \( \alpha(G) \). By definition, the empty set is independent. A bipartite graph is a graph whose vertex set can be partitioned into two disjoint independent sets. The number of vertices in a graph is called the order of the graph.

A vertex set $C \subseteq V(G)$ is a \textit{vertex cover} if every edge of $G$ has at least one endpoint in $C$. A vertex cover of minimum cardinality is called a \textit{minimum vertex cover}.

A \textit{cycle} in $G$ is called \textit{odd} (\textit{even}) if it has an odd (even) number of edges. An \textit{even alternating cycle} with respect to a matching $M$ is an even cycle whose edges alternate between belonging and not belonging to $M$. 

Let $M$ be a matching in $G$. A path (or walk) is called \textit{alternating} with respect to $M$ if, for each pair of consecutive edges in the path, exactly one of them belongs to $M$. If the matching is clear from context, we simply say that the path is alternating. Given an alternating path (or walk), we say that $P$ is: \textit{\(mm\)-\(M\)path} if it starts and ends with edges in $M$, \textit{\(nn\)-\(M\)path} if it starts and ends with edges not in $M$, \textit{\(mn\)-\(M\)path} if it starts with an edge in $M$ and ends with one not in $M$, and \textit{\(nm\)-\(M\)path} if it starts with an unmatched edge and ends with a matched edge. 

For vertices $u,v \in V(G)$, we denote by $d_G(u,v)$ the length of a shortest path between $u$ and $v$ in $G$.

\section{Matching structure of R-disjoint graphs}\label{smat}
We begin this section by mentioning some known results in the structural
theory of non-König--Egerváry graphs.

$ $

In \cite{edmonds1965paths}, Edmonds introduced the following concepts
relative to a matching $M$ of a graph $G$ and its subgraphs. An
$M$-blossom of $G$ is an odd cycle of length $2k+1$ with $k$ edges
in $M$. The vertex not saturated by $M$ in the cycle is called
the \textit{base} of the blossom. An $M$-stem is an $M$-alternating
path of even length (possibly zero) connecting the base of the blossom
with a vertex not saturated by $M$. The base is the only common vertex
between the blossom and the stem. An $M$-flower is a blossom joined
with a stem. The vertex not saturated by $M$ in the stem is called
the \textit{root} of the flower.

In \cite{stersoul1979characterization}, Sterboul introduced the concept
of a \textit{posy} for the first time. An $M$-posy consists of two
vertex-disjoint blossoms joined by an $mm$-$M$path. The endpoints of the
path are exactly the bases of the two blossoms. There are no internal
vertices of the path in the blossoms.

\begin{theorem}\label{safe}
	For a graph $G$, the following properties are equivalent: 
	\begin{itemize}
		\item $G$ is a non-König--Egerváry graph.
		\item For every maximum matching $M$, there exists an $M$-flower or an $M$-posy
		in $G$.
		\item For some maximum matching $M$, there exists an $M$-flower or an $M$-posy
		in $G$.
	\end{itemize}
\end{theorem}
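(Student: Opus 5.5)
This is Sterboul's characterization. The plan is to prove the cycle of implications (ii)$\Rightarrow$(iii)$\Rightarrow$(i)$\Rightarrow$(ii). The implication (ii)$\Rightarrow$(iii) is trivial, since maximum matchings exist.

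\textbf{(iii)$\Rightarrow$(i).} Suppose $G$ were König--Egerváry. Then there is a maximum independent set $S$ whose complement $C=V(G)\setminus S$ is a minimum vertex cover with $|C|=\mu(G)$. By König-type counting, every maximum matching $M$ matches $C$ into $S$. Every edge of $M$ therefore has exactly one endpoint in $C$, and every $M$-unsaturated vertex lies in $S$.

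\emph{Flower.} Take an $M$-flower. Its root is unsaturated, so it lies in $S$. Walking along the even alternating stem, vertices alternate $S,C,S,\dots$: a non-matching edge leaves $S$, so it enters $C$, since $S$ is independent; a matching edge leaves $C$, so it enters $S$. Hence the base lies in $S$. Now traverse the blossom from the base using the same rule: the sides alternate along the odd cycle. The cycle has length $2k+1$, so the two edges at the base would both have to join $S$ to $C$ with consistent parity, and odd length forces some non-matching edge with both ends in $S$. This is a contradiction.

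\emph{Posy.} For an $M$-posy the argument is the same, run in the opposite direction. Each base is the endpoint of an $mm$-path, so each base is matched along the path. Parity along the blossom, which contains no $M$-edge at its base, forces the base into $C$: otherwise an odd cycle would alternate $S/C$, which is impossible. The $mm$-path of odd length then has both ends in $C$ and alternates $C,S,C,\dots$. A path of odd length cannot start and end in $C$ under this alternation, which is the contradiction. The bookkeeping in both cases is the standard "2-colouring along alternating structure" check.

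\textbf{(i)$\Rightarrow$(ii).} This is the main obstacle. Fix a maximum matching $M$ and suppose there is no $M$-flower and no $M$-posy; we show $G$ is König--Egerváry. Let $U$ be the set of $M$-unsaturated vertices.

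1. Define the sets of vertices reachable from $U$ by even alternating walks ($\mathrm{Even}$) and by odd alternating walks ($\mathrm{Odd}$). The absence of flowers gives $\mathrm{Even}\cap\mathrm{Odd}=\emptyset$: a vertex reached at both parities yields, by Edmonds' argument on a shortest such walk, a blossom with a stem. Since $M$ is maximum there is no augmenting path, so $\mathrm{Even}$ is independent and $M$ matches $\mathrm{Odd}$ into $\mathrm{Even}$.
2. On the remaining vertex set $W$, $M$ is perfect. Take a minimal non-matching edge set forming odd cycles. Assign sides by the rule "every $M$-edge gets one end in $S$ and one in $C$, and non-$M$ edges may not join two $S$-vertices". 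This is a 2-SAT-type constraint system. The no-posy hypothesis, together with no flower through $\mathrm{Even}$-vertices, should show the constraint graph has no contradictory cycle, because a contradictory cycle is exactly a pair of blossoms joined by an $mm$-path.
3. This yields a vertex cover $C=\mathrm{Odd}\cup(\text{one end of each }M\text{-edge in }W)$ of size $\mu(G)$, so $G$ is König--Egerváry.

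I expect step 2 to be the delicate part: showing that an unsatisfiable 2-SAT instance produces precisely a posy, with the path internally disjoint from the blossoms. If that fails, I would instead cite Sterboul \cite{stersoul1979characterization}, where this equivalence is proved, since the paper attributes it there.
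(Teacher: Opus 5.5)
Your arguments for (ii)$\Rightarrow$(iii) and (iii)$\Rightarrow$(i) are correct. The $S/C$ alternation shows three things: a blossom's base cannot lie in $S$, a flower's base would have to lie in $S$, and an odd $mm$-path cannot join two vertices of $C$. This never uses disjointness of the two blossoms of a posy. (The paper gives no proof of this theorem; it only attributes it to Sterboul.)

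The gap is in (i)$\Rightarrow$(ii), Step 2, and it is more than an omitted verification. With the definition of posy used in this paper (two \emph{vertex-disjoint} blossoms), the implication is false, so your claim that a contradictory implication cycle yields such a posy cannot be proved. Take $G=K_4$ on $\{1,2,3,4\}$ with $M=\{12,34\}$:
\begin{itemize}
\item $\alpha(G)+\mu(G)=1+2<4$, so $G$ is not K\"onig--Egerv\'ary;
\item $M$ is perfect, so there is no $M$-flower;
\item every odd cycle of $K_4$ is a triangle, so no two blossoms are vertex-disjoint and there is no $M$-posy in the stated sense.
\end{itemize}
In your setup $\mathrm{Even}=\mathrm{Odd}=\emptyset$ and $W=V(G)$, and the 2-SAT instance contains all four clauses on its two variables. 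The implications $1\in S\Rightarrow 4\in S\Rightarrow 2\in S$ and $2\in S\Rightarrow 4\in S\Rightarrow 1\in S$ exhibit the blossoms on $\{1,3,4\}$ (base $1$) and $\{2,3,4\}$ (base $2$), joined by the $M$-edge $12$. These two blossoms share the vertices $3$ and $4$.

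The true statement, the one Sterboul proved and the one your citation fallback would actually cover, allows the two blossoms of a posy to share vertices. Your proof of (iii)$\Rightarrow$(i) goes through unchanged for it. Relative to that definition your reduction is sound:
\begin{itemize}
\item Edmonds' search gives $\mathrm{Even}\cap\mathrm{Odd}=\emptyset$, since an edge between two outer vertices yields a flower or an augmenting path.
\item $M$ is perfect on $W$, and no vertex of $W$ has a neighbour in $\mathrm{Even}$.
\item $\mathrm{Odd}$ together with one end of each $M$-edge of $W$ has exactly $\mu(G)$ vertices.
\end{itemize}
So everything reduces to $G[W]$, but Step 2 is then the whole theorem. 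Unsatisfiability gives an $M$-edge $ab$ of $G[W]$ with an odd closed $nn$-alternating walk at $a$ and another at $b$. From each you must extract a stem ending in a genuine blossom. You must then resolve all the ways the two structures can intersect, to obtain two blossoms whose bases are joined by an odd $mm$-alternating path as a posy requires.

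Your proposal only asserts this (``should show''), and ``take a minimal non-matching edge set forming odd cycles'' does not define a usable object. So (i)$\Rightarrow$(ii) is unproved in your proposal, and for the theorem as literally stated here it cannot be proved at all.
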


Sterboul \cite{stersoul1979characterization} was the first to characterize
König--Egerváry graphs via forbidden configurations relative to
a maximum matching. Subsequently, Korach, Nguyen,
and Peis \cite{korach2006subgraph} reformulated this characterization
in terms of simpler configurations, unifying the structures of
flowers and posies. Later, Bonomo et al. \cite{bonomo2013forbidden}
obtained a purely structural characterization based on forbidden subgraphs.
More recently, in \cite{jaume2025confpart1,jaume2025confpart2,jaume2025confpart3},
results were obtained that simplify working with flower and posy
structures.

$ $

Given a graph $G$ and an odd cycle $C$ of $G$, we define the \emph{reach
	set} of $C$ as
\[
R(C):=\bigcup_{F}V(F),
\]
where the union is over all $M$-flowers $F$ of $G$
having $C$ as their $M$-blossom, for some maximum matching $M$.

A graph $G$ is \emph{odd cycle disjoint} if every pair of distinct
odd cycles $C$ and $C'$ of $G$ satisfies $V(C)\cap V(C')=\emptyset$.
An $R$-\emph{disjoint graph} $G$ is a graph with at least
one odd cycle, such that $R(C)\neq\emptyset$ and $R(C)\cap R(C')=\emptyset$
for each pair of distinct odd cycles $C$ and $C'$ of $G$.
Therefore, every $R$-\emph{disjoint graph} is also an odd cycle
disjoint graph, so each odd cycle is chordless, and moreover $V(C)\subseteq R(C)$.

In an $R$-disjoint graph $G$, one can naturally consider the
following partition of the vertex set:
\[
\left\{ R(C) : C \text{ is an odd cycle of } G \right\} \cup \{B(G)\},
\]
where $B(G)=V(G)\setminus \bigcup_{C} R(C)$. This is the \emph{flower
	decomposition} of $G$. It is immediate that $G[B(G)]$ is bipartite,
since it contains no odd cycles. Moreover, the only odd cycle in
$G[R(C)]$ is $C$. Therefore, we immediately obtain the following
observation.

$ $

\begin{figure}[h!]
	%Tikz
	
	\begin{center}

		\tikzset{every picture/.style={line width=0.75pt}} %set default line width to 0.75pt        
		
		% [inline block 0: 1 envs, 26841 chars -> data_tex | \begin{tikzpicture}[x=0.75pt,y=0.75pt,yscale=-1,xscale=1] 			%uncomment if require: \path (0,300); %set diagram left sta...]


	\end{center}
	\caption{In this figure, we show a graph $G$ of order $18$. A maximum matching is
		highlighted in red, and a maximum independent set in blue. Notice that 
		$\alpha(G)=9$, while $\textnormal{corona}(G)=V(G)\setminus\{4,16\}$, 
		and $\textnormal{ker}(G)=\textnormal{core}(G)=\{1,2,17,18\}$. 
		Hence, $\textnormal{corona}(G)\cup N(\textnormal{core}(G))=V(G)$, 
		and 
		$\left|\textnormal{corona}(G)\right|+\left|\textnormal{core}(G)\right|=16+4=20=2\alpha(G)+2=2\cdot 9+2.$
		Moreover, $G$ contains only $2$ odd cycles.
	}
	\label{Figurapane}
	
\end{figure}
\begin{observation}\label{observ}
	If $G$ is an $R$-disjoint graph and $C$ is
	an odd cycle of $G$, then the following statements hold:
	\begin{itemize}
		\item $G[R(C)]$ is an almost bipartite graph.
		\item $G[B(G)]$ is a bipartite graph.
	\end{itemize}
\end{observation}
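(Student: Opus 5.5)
Both items come down to locating the odd cycles of $G$ relative to the parts of the flower decomposition. The first step is to show that every odd cycle $C'$ of $G$ satisfies $V(C')\subseteq R(C')$. By definition of an $R$-disjoint graph, $R(C')\neq\emptyset$, so there is a maximum matching $M$ and an $M$-flower $F$ whose $M$-blossom is $C'$. Every vertex of the blossom lies in $V(F)$, hence $V(C')\subseteq V(F)\subseteq R(C')$. (The paper notes this right after the definition, but we will re-derive it here, since both items rest on it.)

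For the second item, suppose $G[B(G)]$ contained an odd cycle $C'$. Then $V(C')\subseteq B(G)$, so $V(C')$ is disjoint from $R(C')$. But the first step gives $V(C')\subseteq R(C')$, and $V(C')\neq\emptyset$, which is a contradiction. So $G[B(G)]$ has no odd cycle, and by the standard characterization (a graph is bipartite iff it has no odd cycle) it is bipartite.

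For the first item, let $C$ be an odd cycle of $G$. By the first step, $C$ itself is an odd cycle contained in $G[R(C)]$. Now let $C'$ be any odd cycle of $G[R(C)]$; it is also an odd cycle of $G$. If $C'\neq C$, then $V(C')\subseteq R(C)$ by assumption and $V(C')\subseteq R(C')$ by the first step. Hence $R(C)\cap R(C')\supseteq V(C')\neq\emptyset$, contradicting $R$-disjointness. Therefore $C$ is the unique odd cycle of $G[R(C)]$, so $G[R(C)]$ is almost bipartite.

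The only point that needs any care is the first step, namely that the blossom of a flower is contained in the flower's vertex set. This is immediate from the definition of an $M$-flower as a blossom joined with a stem. Beyond that, the argument is just bookkeeping with the disjointness hypothesis, and I expect no real obstacle.
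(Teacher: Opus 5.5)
Your proof is correct and follows the paper's own reasoning. The paper uses $R(C)\neq\emptyset$ to get $V(C)\subseteq R(C)$, and from that concludes that $G[B(G)]$ has no odd cycle and that $C$ is the only odd cycle of $G[R(C)]$ by $R$-disjointness. You simply spell these steps out in more detail.
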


By \cref{safe}, every $R$-disjoint graph is a non-König--Egerváry
graph. Hence, a graph $G$ is an $R$-disjoint graph with a unique
odd cycle if and only if $G$ is an almost bipartite non-König--Egerváry
graph. Therefore, $R$-disjoint graphs generalize these graphs,
allowing the existence of multiple odd cycles. In addition to \cref{observ},
we will prove in \cref{obsthm} that each component $G[R(C)]$ is also
a non-König--Egerváry graph.

\begin{theorem}
	[\cite{edmonds1965paths,gallai1964maximale}\label{ge}] Given
	a graph $G$, let 
	\begin{align*}
		D(G) & :=\{ v : \text{there exists a maximum matching missing } v \} \\
		A(G) & :=\{ v : v \text{ is a neighbor of some } u \in D(G), \text{ but } v \notin D(G) \} \\
		C(G) & := V(G) \setminus (D(G) \cup A(G)).
	\end{align*}
	\noindent If $G_1,\dots,G_k$ are the components of $G[D(G)]$
	and $M$ is a maximum matching of $G$, then
	\begin{enumerate}
		\item $M$ covers $C(G)$ and matches $A(G)$ into distinct components of $G[D(G)]$.
		\item Each $G_i$ is a factor-critical graph, and $M$ restricted to $G_i$
		is a near-perfect matching. 
	\end{enumerate}
\end{theorem}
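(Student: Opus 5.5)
The statement to prove is the Gallai--Edmonds structure theorem (\cref{ge}). I would follow the classical route through the Tutte--Berge formula and a stability lemma.

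\textbf{Step 1: Tutte--Berge deficiency bound.} Write $\operatorname{def}(G)=|V(G)|-2\mu(G)$. For every $S\subseteq V(G)$, let $o(G-S)$ denote the number of odd components of $G-S$. Any matching leaves at least $o(G-S)-|S|$ vertices unsaturated. Hence
\[
\operatorname{def}(G)\ge o(G-S)-|S|
\]
for every $S$. The strategy is to show that $S=A(G)$ attains equality and to read the structure off that equality.

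\textbf{Step 2: Stability lemma.} If $a\in A(G)$, then
\[
D(G-a)=D(G),\qquad A(G-a)=A(G)\setminus\{a\},\qquad \mu(G-a)=\mu(G)-1 .
\]
To prove $\mu(G-a)=\mu(G)-1$: since $a\notin D(G)$, every maximum matching covers $a$. To compare $D(G-a)$ with $D(G)$, I would use alternating-path and symmetric-difference exchange arguments between maximum matchings of $G$ and of $G-a$.

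Iterating the lemma over all of $A(G)$ reduces the problem to the graph $G-A(G)$. In that graph, no vertex of $D$ has a neighbour in $C(G)$, so the components of $G[D(G)]$ are exactly the components of $G-A(G)$ that meet $D$.

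\textbf{Step 3: The case $A(G)=\emptyset$.} Here one shows that every component $H$ of $G$ with $V(H)\subseteq D(G)$ is factor-critical. I would invoke Gallai's lemma: a connected graph in which every vertex is missed by some maximum matching is factor-critical. Its proof takes two maximum matchings missing different vertices $u$ and $v$, examines their symmetric difference, and shows that $\mu$ must leave exactly one vertex exposed. Also, $G[C(G)]$ has a perfect matching, because no maximum matching misses any vertex of $C(G)$.

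\textbf{Step 4: Consequences for an arbitrary maximum matching $M$.} Counting gives
\[
\operatorname{def}(G)=(\#\text{components of } G[D])-|A(G)| .
\]
Combined with the bound of Step 1, this forces every maximum matching $M$ to do three things:
\begin{itemize}
\item match each vertex of $A(G)$ into a distinct $D$-component;
\item restrict to a near-perfect matching on each $G_i$;
\item perfectly match $C(G)$ within itself.
\end{itemize}
These are exactly items 1 and 2 of the statement.

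\textbf{Main obstacle.} The hardest parts are the stability lemma of Step 2 and Gallai's lemma in Step 3. Both require careful alternating-path case analysis, namely showing that the components of the symmetric difference behave as claimed. For Gallai's lemma I would argue by contradiction: assume some maximum matching misses two vertices of $H$, choose such a pair at minimum distance, and derive a contradiction along a shortest path between them.
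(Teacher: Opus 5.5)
The paper gives no proof of this theorem. It is quoted from Edmonds and Gallai and used as a black box, so there is no in-paper argument to compare your route against. Your outline is the standard textbook proof: the barrier $A(G)$ in the Tutte--Berge bound, a stability lemma to delete $A(G)$, Gallai's lemma on the $D$-components, then the equality analysis. It is sound, but two points need sharpening. In Step 3, the minimal-distance argument for Gallai's lemma only works if the minimum is taken over all triples $(M,u,v)$ with $M$ maximum and $u,v$ exposed by $M$. The contradiction is produced by a different maximum matching $M'\triangle P_u$, not by $M$ itself. In Step 4, you should say why $o(G-A(G))$ equals the number of components of $G[D(G)]$: $G[C(G)]$ has a perfect matching, so all its components are even.

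The step you leave unargued is the inclusion $D(G-a)\subseteq D(G)$ in Step 2. It is the only place where you need $a$ to have a neighbour in $D(G)$, not merely $a\notin D(G)$. For $a\in C(G)$ the inclusion is false: a single edge $ab$ has $D(G)=\emptyset$ but $D(G-a)=\{b\}$. The reverse inclusion is easy: remove the edge at $a$ from a maximum matching of $G$ that misses $x$.

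For the hard inclusion, take $x\in D(G-a)\setminus D(G)$ and a neighbour $u\in D(G)$ of $a$. Let $M_u$ be a maximum matching of $G$ missing $u$, and $N$ a maximum matching of $G-a$ missing $x$, so $|N|=\mu(G)-1$.
\begin{itemize}
\item If $N$ misses $u$, then $N\cup\{au\}$ is a maximum matching of $G$ missing $x$.
\item Otherwise, in $M_u\triangle N$ let $P_x$ be the path starting at $x$; it begins with an $M_u$-edge. If $P_x$ is even, $M_u\triangle P_x$ is a maximum matching of $G$ missing $x$.
\item If $P_x$ is odd, its other end is $N$-exposed. That end must be $a$, since otherwise $N\triangle P_x$ would be a matching of $G-a$ of size $\mu(G)$.
\item Hence the path $P_u$ starting at $u$, which begins with an $N$-edge, avoids both $a$ and $x$. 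If $P_u$ is odd, it is $M_u$-augmenting. If it is even, $(N\triangle P_u)\cup\{au\}$ is a maximum matching of $G$ missing $x$.
\end{itemize}
Every case gives a contradiction. With this exchange filled in, your proof is complete.
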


\begin{lemma}\label{lematrivial}
	Every factor-critical graph contains an odd cycle.
\end{lemma}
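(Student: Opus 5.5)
The plan is to argue by the definition of factor-critical and the parity of a bipartite graph's sides. Let $H$ be factor-critical. First dispose of the trivial case: if $H$ has a single vertex, it has no odd cycle, so I will adopt the convention, implicit in how the lemma is used alongside \cref{ge}, that $H$ has at least two vertices (equivalently at least three, since a factor-critical graph has odd order: $H-v$ has a perfect matching, so $|V(H)|-1$ is even). The single-vertex components of $G[D(G)]$ must therefore be excluded, or the lemma read for nontrivial factor-critical graphs; I will state this explicitly at the start.

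Suppose, for contradiction, that $H$ has no odd cycle, so $H$ is bipartite with bipartition $(X,Y)$. Since $|V(H)|=|X|+|Y|$ is odd, $|X|\neq|Y|$; say $|X|>|Y|$, so $X\neq\emptyset$. Choose $v\in Y$ if $Y\neq\emptyset$. Then $H-v$ has bipartition $(X, Y\setminus\{v\})$ with $|X|>|Y|-1$, so every matching of $H-v$ leaves some vertex of $X$ unsaturated, contradicting that $H-v$ has a perfect matching. If $Y=\emptyset$, then $H$ has no edges, and since $|X|\ge 3$, deleting any vertex leaves at least two isolated vertices with no perfect matching, again a contradiction. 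Hence $H$ is not bipartite, and by the standard characterization of bipartite graphs (no odd cycles), $H$ contains an odd cycle.

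The only real subtlety is the degenerate single-vertex case, where the statement as literally written fails (since $K_1$ is factor-critical). I would handle this by noting that it only matters for trivial components of $G[D(G)]$, and phrasing the lemma for factor-critical graphs with at least one edge. The counting step itself is routine.
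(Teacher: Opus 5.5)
Your proof is correct and follows the paper's argument: a bipartite graph of odd order has unequal sides, and deleting a vertex from the smaller side leaves a bipartite graph whose sides differ by at least two, so it has no perfect matching. Your remark about $K_1$ is also right: it is factor-critical and has no odd cycle, so the paper's one-line proof tacitly needs the smaller side to be nonempty. The caveat is harmless in the paper, since the lemma is only applied to components $D_x$ with at least three vertices.
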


\begin{proof}
	Notice that a factor-critical graph cannot be bipartite, since
	removing a vertex from the smaller side would result in a graph
	that has no perfect matching.
\end{proof}

An ear decomposition $G_0,G_1,\dots,G_k=G$ of a graph $G$
is a sequence of graphs with the first graph being simple (e.g., a
vertex, edge, even cycle, or odd cycle), and each graph $G_{i+1}$ 
is obtained from $G_i$ by adding an ear. Adding an ear is done
as follows: take two vertices $u$ and $v$ of $G_i$ and add a
path $P_i$ from $u$ to $v$ such that all vertices on the path
except $u$ and $v$ are new vertices.

\begin{theorem}
	[\cite{lovasz1972structure}\label{lovasz}] A graph $G$ is factor-critical
	if and only if $G$ has an odd-length ear decomposition starting from
	an odd cycle. 
\end{theorem}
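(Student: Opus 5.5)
This is Lovász's classical characterization, and I would prove the two directions separately. Throughout, I would allow an ear to be \emph{closed} (its endpoints $u$ and $v$ may coincide). This is needed for factor-critical graphs with cut vertices, such as two triangles sharing a vertex. I would also treat $K_1$ as the degenerate case.

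\emph{Sufficiency} is a direct induction on the number of ears.
\begin{itemize}
\item An odd cycle is factor-critical: deleting any vertex leaves a path with an even number of vertices, which has a perfect matching.
\item Suppose $G_{i+1}$ arises from a factor-critical $G_i$ by adding an odd ear $P$ from $u$ to $w$. Then $P$ has an even number of internal vertices $x_1,\dots,x_{2m}$.
\item If the deleted vertex $v$ lies in $G_i$, take a perfect matching of $G_i-v$ and add the edges $x_1x_2,\dots,x_{2m-1}x_{2m}$.
\item If $v=x_j$ with $j$ odd, pair $x_1,\dots,x_{j-1}$ consecutively. The remaining run $x_{j+1},\dots,x_{2m}$ has odd size, so match $x_{2m}$ with $w$, pair the rest consecutively, and add a perfect matching of $G_i-w$.
\item The case $j$ even is symmetric, using $u$ and $x_1$ instead.
\end{itemize}

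\emph{Necessity.} Let $G$ be factor-critical with more than one vertex. Then $G$ is connected, since a component not containing the deleted vertex would need a perfect matching for every choice of that vertex, which is impossible. By \cref{lematrivial}, $G$ contains an odd cycle, so there is a subgraph $H$ of $G$ admitting an odd ear decomposition starting from an odd cycle. Choose such an $H$ with $|V(H)|+|E(H)|$ maximum; the goal is to show $H=G$.

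\begin{itemize}
\item \emph{Edges with both ends in $H$.} If $e=ab\in E(G)\setminus E(H)$ has $a,b\in V(H)$, then $e$ is itself an ear of length $1$, which is odd. This contradicts maximality, so $H$ is an induced subgraph.
\item \emph{Setting up an exiting edge.} If $V(H)\neq V(G)$, connectivity gives an edge $xy$ with $x\in V(H)$ and $y\notin V(H)$. Take near-perfect matchings $M_x$ and $M_y$ of $G$ missing $x$ and $y$ respectively. The symmetric difference $M_x\triangle M_y$ contains an alternating path $Q$ from $x$ to $y$ of even length.
\item \emph{Building the ear.} Traverse $Q$ from $y$ until it first hits a vertex $z\in V(H)$, and form the ear $xy\cdots z$. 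A finer argument then keeps only the portion of $Q$ after its last exit from $H$, which yields a genuine ear whose internal vertices avoid $H$.
\end{itemize}

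The main obstacle is the \emph{parity} of this ear, which need not be odd for an arbitrary choice of matchings. Following Lovász--Plummer, I would first prove a strengthened invariant for the maximal $H$: $H$ is factor-critical (by the sufficiency part), and for every $v\in V(H)$ the matching $M_v$ can be chosen so that it restricts to a perfect matching of $H-v$. This invariant holds because $|V(H)|$ is odd and $H$ is factor-critical, so one may swap $M_v$ on $H$ by symmetric differences.

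With this invariant, no $M_v$-edge leaves $H$. Consequently, an alternating path leaving $H$ must start with a non-matching edge and must re-enter $H$ through a non-matching edge. Alternation then forces the excursion outside $H$ to have odd length, which is exactly an odd ear, contradicting the maximality of $H$.

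I expect verifying and maintaining this invariant, so that the alternating excursion is forced to be odd, to be the delicate step. Everything else is routine bookkeeping.
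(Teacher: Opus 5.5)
The paper quotes this theorem from Lovász without proof, so your argument has to stand on its own. Your sufficiency direction is correct. You are also right that closed ears must be allowed: the bowtie is factor-critical but has a cut vertex.

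The necessity direction has a genuine gap, exactly at the step you flagged. Your invariant says that for every $v\in V(H)$ some perfect matching of $G-v$ restricts to a perfect matching of $H-v$. Since $H$ is factor-critical, this is equivalent to $G-V(H)$ having a perfect matching. It does not follow from $|V(H)|$ being odd and $H$ being factor-critical. If you replace the $M_v$-edges inside $H$ by a perfect matching of $H-v$, the outside partners of the vertices that $M_v$ matched across the boundary become uncovered. Parity only tells you that an even number of $M_v$-edges cross, not that none do.

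Concretely, take vertices $a,b,c,x,y$ and edges $ab,bc,ca,xa,xb,yb,yc$.
\begin{itemize}
\item This $G$ is factor-critical: $G-a$ has $\{xb,yc\}$, $G-b$ has $\{xa,yc\}$, $G-c$ has $\{xa,yb\}$, $G-x$ has $\{ab,yc\}$, and $G-y$ has $\{xa,bc\}$.
\item The triangle $H=abc$ is an odd cycle, but $x$ and $y$ are non-adjacent, so $G-V(H)$ has no perfect matching.
\item Worse, this $H$ admits no odd ear at all: the only ears are $axb$ and $byc$, both of length $2$.
\end{itemize}
So the claim that every proper odd-ear-decomposable subgraph can be enlarged by an odd ear is false. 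Maximizing $|V(H)|+|E(H)|$ does not rescue the argument, because you use maximality only through that extension step. Nothing you prove distinguishes the maximizer from such a stuck subgraph.

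The standard repair is to build the matching condition into the choice of $H$ rather than derive it afterwards.
\begin{itemize}
\item Fix a vertex $r$ and one perfect matching $M$ of $G-r$.
\item Take $H\ni r$ maximal among subgraphs that have an odd ear decomposition and satisfy the following: every edge of $M$ either lies in $E(H)$ or avoids $V(H)$. Start the decomposition from $\{r\}$, so the first ear is a closed odd ear, i.e.\ an odd cycle through $r$.
\item Edges joining two vertices of $V(H)$ but not in $E(H)$ are not in $M$, so they can be added as ears of length $1$.
\item Given an exiting edge $xy$, let $M_y$ be a perfect matching of $G-y$. The component of $M\triangle M_y$ containing $y$ is a path from $y$ to $r\in V(H)$ that starts with an $M$-edge. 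At its first vertex $z\in V(H)$ it must arrive through an $M_y$-edge, because no $M$-edge joins $V(H)$ to its complement.
\item Hence that segment has even length, and $x,y,\dots,z$ is an odd (possibly closed) ear. Its internal vertices are matched by $M$ along the ear, so compatibility with $M$ is preserved and maximality is contradicted.
\end{itemize}
This is exactly your parity argument, but run with a single matching kept compatible with $H$ throughout. Equivalently, the property that $G-V(H)$ has a perfect matching is maintained inductively, not claimed for an arbitrary maximal $H$.
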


\begin{corollary}\label{lovaszk}
	If $G$ is an odd cycle disjoint graph, then
	each component of $G[D(G)]$ is either an isolated vertex or an odd cycle.
\end{corollary}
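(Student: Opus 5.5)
By \cref{ge}, each component $H$ of $G[D(G)]$ is factor-critical. If $H$ is a single vertex there is nothing to prove, so assume $|V(H)|\ge 3$. We will show that $H$ is an odd cycle by combining \cref{lovasz} with the hypothesis that $G$ is odd cycle disjoint. Note that $H$ is a subgraph of $G$, so any two distinct odd cycles of $H$ are odd cycles of $G$ and hence vertex-disjoint.

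By \cref{lovasz}, $H$ has an odd ear decomposition $H_0,H_1,\dots,H_t=H$, where $H_0$ is an odd cycle $C$ and each $H_{i+1}$ arises from $H_i$ by adding an ear $P_i$ of odd length. We claim that $t=0$, i.e.\ that no ear is ever added. Suppose instead that $t\ge 1$, and consider the first ear $P_0$, attached to $C$ at endpoints $u,v$.

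If $u\neq v$, then $u$ and $v$ split $C$ into two $u$--$v$ paths $Q_1,Q_2$ with $|Q_1|+|Q_2|=|C|$ odd, so exactly one of them, say $Q_1$, has even length. Since $P_0$ has odd length, $P_0\cup Q_1$ is an odd cycle. It shares $u$ and $v$ with $C$ and uses the new edges of $P_0$, so it is distinct from $C$. This contradicts odd cycle disjointness.

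If $u=v$ (an open ear allowed to close up at a single vertex), then $P_0$ is itself a cycle through $u$ of odd length. It shares $u$ with $C$ and is distinct from it, which again gives a contradiction. In a simple graph an ear with $u=v$ must have length at least $3$, so this case is well defined.

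Hence $t=0$ and $H=H_0$ is an odd cycle. Since $H$ is a subgraph of $G$ (not necessarily induced), we should also say what is meant: the component $H$ of $G[D(G)]$ itself equals that cycle, which is what \cref{lovasz} delivers when applied to $H$. Alternatively, we can note that a chord of $C$ would create, together with one of the two arcs it determines, a smaller odd cycle sharing vertices with $C$.

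The only delicate point is the convention in \cref{lovasz} regarding ears with coinciding endpoints. We handle it by treating both cases $u\neq v$ and $u=v$ as above.
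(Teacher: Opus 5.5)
Your proof is correct and follows the same route as the paper: factor-criticality of each component via \cref{ge}, an odd ear decomposition via \cref{lovasz}, and a contradiction because the first ear creates a second odd cycle that meets $C$. The difference is that you write out the parity argument, including the closed-ear case $u=v$, which the paper leaves as ``easy to see''.
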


\begin{proof}
	Let $H$ be a connected component of $G[D(G)]$. By
	\cref{ge}, $H$ is a factor-critical graph. Suppose
	that $H$ is neither an odd cycle nor an isolated vertex. Then, by
	\cref{lovasz}, $H$ has an odd-length ear decomposition starting
	from an odd cycle, $H_0,\dots,H_k=H$, with $k\ge 1$. But $H_1$ is
	an odd cycle to which an odd ear is added; it is easy to see that
	$H_1$ would then contain two odd cycles sharing vertices, which
	are also cycles of $G$, a contradiction.
\end{proof}

\begin{lemma}\label{reach}
	Let $G$ be a graph and $M$ a maximum matching. Then,
	for each $x \in D(G)$, either $M(x)=x$, or there exists an $mn$-$M$ path
	from $x$ to a vertex not saturated by $M$ in $G$.
\end{lemma}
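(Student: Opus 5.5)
The proof goes through the standard symmetric-difference argument between $M$ and a maximum matching that misses $x$. Fix $x\in D(G)$ with $M(x)\neq x$. By the definition of $D(G)$ there is a maximum matching $M'$ with $M'(x)=x$. Consider the symmetric difference $M\triangle M'$. Every vertex has degree at most $2$ in it, so its components are paths and even alternating cycles. The vertex $x$ is saturated by $M$ and not by $M'$, so $x$ has degree exactly $1$ in $M\triangle M'$, and the edge at $x$ is $xM(x)\in M$. Hence the component containing $x$ is a path $P$ with $x$ as one endpoint, and its first edge lies in $M$.

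The path $P$ alternates between edges of $M$ and edges of $M'$. Let $y$ be its other endpoint. There are two cases, according to the parity of $P$.

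\textbf{$P$ has odd length.} Then its last edge also lies in $M$, so $P$ is an $mm$-path. Both endpoints $x$ and $y$ are saturated by $M$ and unsaturated by $M'$; the latter holds at $y$ because $y$ is an endpoint of $P$, so $y$ cannot be covered by an $M'$-edge. Then $M'\triangle E(P)$ is a matching of size $|M'|+1$, which contradicts the maximality of $M'$. So this case cannot occur.

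\textbf{$P$ has even length.} This is the only remaining possibility; note that $P$ has length at least $1$ since it contains $xM(x)$, so here it has length at least $2$. The last edge lies in $M'\setminus M$. The endpoint $y$ is not covered by any $M$-edge: otherwise that edge would belong to $M\setminus M'$, and $P$ would continue past $y$. So $y$ is not saturated by $M$. Therefore $P$ is an $M$-alternating path that starts at $x$ with an edge of $M$ and ends with an edge not in $M$, at a vertex not saturated by $M$. This is exactly the $mn$-$M$path required.

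The step that needs care is showing that the $mm$ case is impossible, i.e.\ checking that both endpoints of $P$ are $M'$-exposed so that the augmentation of $M'$ is valid. The rest is bookkeeping about the component structure of $M\triangle M'$.
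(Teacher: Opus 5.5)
Your proof is correct and follows the same route as the paper: take a maximum matching $M'$ that misses $x$ and look at the component of $M\triangle M'$ containing $x$. You make explicit what the paper states in one line, namely that this component is an even-length path (an odd-length one would be $M'$-augmenting) and therefore ends, via an edge of $M'\setminus M$, at a vertex not saturated by $M$.
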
 

\begin{proof}
	Let $M$ and $M'$ be maximum matchings of $G$ such that
	$M(x)\neq x$ and $M'(x)=x$. Considering the symmetric difference
	$M \triangle M'$ gives a graph composed of alternating
	$MM'$ cycles and even-length alternating $MM'$ paths. Then $x$
	is in one of these alternating paths, and therefore there exists
	an $mn$-$M$ path from $x$ to a vertex not saturated by $M$ in $G$.
\end{proof}

\begin{lemma}\label{as2}
	Let $G$ be an $R$-disjoint graph, $C$ an odd cycle
	of $G$, and $x \in R(C)$. Let $F$ be an $M$-flower of $G$ with base $c \in V(C)$
	containing $x$. Then the following statements hold:
	\begin{itemize}
		\item If $x \in V(C)$, then $x \in D(G)$.
		\item If $x \notin V(C)$ and $d_F(x,c)$ is even, then $x \in D(G)$.
		\item If $x \notin V(C)$ and $d_F(x,c)$ is odd, then $x \in A(G)$.
	\end{itemize}
\end{lemma}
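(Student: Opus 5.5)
The plan is to work entirely inside the given $M$-flower $F$. Write $F$ as its blossom $C$ with base $c$ (the vertex of $C$ not saturated by $M$ inside $C$) together with its stem $P$. The stem is an even-length $M$-alternating path from $c$ to the root $r$, which is $M$-unsaturated. Along $P$, read from $r$, the edges alternate non-matched, matched, \dots, and the last edge at $c$ is matched; this is because $c$ is saturated by an edge of $P$ when $P$ is nontrivial, and $c=r$ when $P$ is trivial. The three items are then proved by exhibiting, for each candidate vertex $x$, either a maximum matching that misses $x$ (to get $x\in D(G)$), or an $M$-unsaturated neighbour-type argument (to get $x\in A(G)$). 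The main tool is the standard swap: if $Q$ is an even alternating path from an $M$-unsaturated vertex $r$ to a vertex $y$, ending with a matched edge, then $M\triangle E(Q)$ is a maximum matching of the same size that misses $y$.

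For the first item, let $x\in V(C)$. Going around $C$ in the appropriate direction from $c$ gives an even-length alternating path inside $C$ from $c$ to $x$ whose last edge is matched. This uses that $C$ has $2k+1$ edges with $k$ of them matched, none of them at $c$. Concatenating $P$ with this path gives an even alternating path from the unsaturated root $r$ to $x$ ending in a matched edge. This is a genuine path, since $P$ meets $C$ only at $c$, so the swap applies. The resulting matching $M'$ is maximum and misses $x$, hence $x\in D(G)$. For $x=c$ we simply use $Q=P$, or note $x=r$ if $P$ is trivial.

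For the second item, let $x\in V(P)\setminus\{c\}$ with $d_F(x,c)$ even. Since $F$ is the union of the cycle and the path, I will take $d_F(x,c)$ to equal the distance along $P$. Parity along $P$ then gives $d_P(r,x)$ even, because $|P|$ is even. The subpath of $P$ from $r$ to $x$ is an even alternating path ending in a matched edge, or $x=r$, so swapping again yields a maximum matching missing $x$, and $x\in D(G)$.

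For the third item, let $d_F(x,c)$ be odd. The neighbour $y$ of $x$ on $P$ closer to $r$ satisfies $d_P(r,y)$ even, and by the second item $y\in D(G)$; so $x$ is adjacent to a vertex of $D(G)$. It remains to show $x\notin D(G)$, and this is the main obstacle. I would argue by contradiction using \cref{ge}. If $x\in D(G)$, then since $M(x)\neq x$ (it is an interior odd-position vertex of the stem), the partner $M(x)$, which is at even distance from $r$ and hence in $D(G)$ by the second item, lies in the same component of $G[D(G)]$ as $x$. Continuing along the stem, I expect the whole segment from $x$ through $c$, together with $C$, to be forced into one component $H$ of $G[D(G)]$. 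By \cref{lovaszk}, each component of $G[D(G)]$ is a single vertex or an odd cycle. Since $C\subseteq D(G)$ by the first item and $C$ is a component-sized odd cycle, $H=C$. But $x\notin V(C)$, a contradiction. To make this rigorous I would use that $A(G)$ separates components of $D(G)$ and that $M$ restricted to each component is near-perfect. An edge of $M$ joining two vertices of $D(G)$ must lie inside one factor-critical component, here an odd cycle or a single vertex. The matched edge $xM(x)$ would therefore lie in an odd cycle $C'\neq C$ of $G$ containing $x$. Then $C'$ with its own alternating path to an unsaturated vertex would give $x\in R(C')\cap R(C)$, contradicting $R$-disjointness. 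This last step is the delicate one, and I would check it carefully, possibly using \cref{reach} to produce the required flower for $C'$.
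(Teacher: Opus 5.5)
Your proof is correct and follows the paper's route. Items 1 and 2 go by swapping $M$ along the stem and around the blossom. Item 3 goes by supposing $x\in D(G)$: then $x$ and $M(x)$ lie in one component of $G[D(G)]$, which by \cref{lovaszk} is an odd cycle $C'\neq C$ through $x$, contradicting $R(C)\cap R(C')=\emptyset$. This one observation covers both of the paper's cases $D_x=D_C$ and $D_x\neq D_C$, and your use of $M(x)\in D(G)$ supplies the nontriviality of $D_x$ that the paper only asserts.

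Two corrections. First, drop the intermediate guess that the whole stem segment from $x$ to $c$ is forced into one component; the other odd-position stem vertices need not lie in $D(G)$, and you do not need this claim. Second, the step you flag as delicate is immediate: $R(C')\neq\emptyset$ by the definition of an $R$-disjoint graph, and every flower with blossom $C'$ contains $V(C')$, so $x\in R(C')$ without \cref{reach}.
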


\begin{proof}
	The first two items are immediate, as it is easy
	to modify $M$ to obtain a matching that does not saturate $x$, see
	\cref{Figura1}.

	$ $
	\begin{figure}[h!]
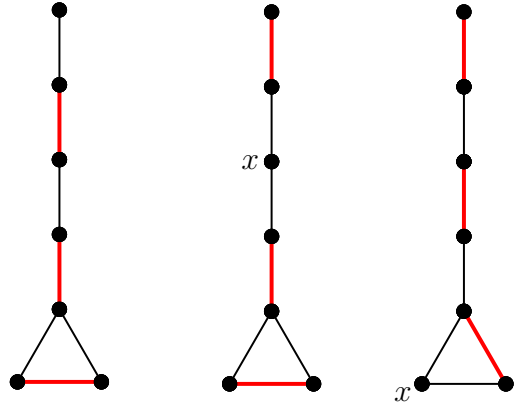

		%Tikz
		
		\begin{center}

			\tikzset{every picture/.style={line width=0.75pt}} %set default line width to 0.75pt        
			
			% [inline block 1: 1 envs, 26693 chars -> data_tex | \begin{tikzpicture}[x=0.75pt,y=0.75pt,yscale=-1,xscale=1] 				%uncomment if require: \path (0,318); %set diagram left st...]


		\end{center}
		\caption{Illustration of Case 1 in the proof of \cref{as2}
		}
		\label{Figura1}
	\end{figure}

	$ $
	Suppose that $x \notin V(C)$ and $d_F(x,c)$ is odd. By contradiction,
	also assume that $x \notin A(G)$, but since $N(x) \cap D(G) \neq \emptyset$,
	it follows that $x \in D(G)$. Let $D_C$ be the component in $G[D(G)]$
	containing the vertices in $V(C)$, and let $D_x$ be the component
	of $G[D(G)]$ containing $x$. Note that $D_x$ has at least
	three vertices.
	
	$ $
	
	\textbf{Case 1.} Suppose that $D_x = D_C$. Then, since $x \in V(D_C) \setminus V(C)$,
	$D_C$ is nontrivial and is not a cycle. This contradicts
	\cref{lovaszk}, since $G$ is odd cycle disjoint.
	
	\textbf{Case 2.} Suppose that $D_x \neq D_C$. Since $D_x$
	has at least three vertices, by \cref{lematrivial} there exists an odd cycle $C'$
	of $G$ containing $x \in V(D_x) \cap R(C')$. Since $x \notin V(C)$,
	we have $C \neq C'$. But $x \in R(C) \cap R(C')$, which
	is a contradiction, as $G$ is an $R$-disjoint graph.
	
\end{proof}
\begin{corollary}\label{1s2}
	Let $G$ be an $R$-disjoint graph, $C$ an odd cycle
	of $G$, and $x \in R(C) - V(C)$. Then 
	\[
	\left|d_F(x,c)\right| \equiv_2 \left|d_{F'}(x,c')\right|.
	\]
	\noindent Where $F$ and $F'$ are an $M$-flower and an $M'$-flower
	of $G$ with bases $c,c' \in V(C)$ respectively such that $x \in V(F) \cap V(F')$,
	for any pair of maximum matchings $M$ and $M'$ of $G$. 
\end{corollary}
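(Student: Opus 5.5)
This follows directly from \cref{as2}, since membership in $D(G)$ versus $A(G)$ does not depend on the matching. Both $D(G)$ and $A(G)$ in \cref{ge} are defined intrinsically from $G$: $D(G)$ is the set of vertices missed by \emph{some} maximum matching, and $A(G)$ is the set of neighbours of $D(G)$ outside $D(G)$. By definition, $D(G)\cap A(G)=\emptyset$.

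Fix $x\in R(C)\setminus V(C)$, maximum matchings $M,M'$, and flowers $F,F'$ with bases $c,c'\in V(C)$ that both contain $x$. Apply \cref{as2} to $F$, which is legitimate because $x\notin V(C)$ and $x\in V(F)\subseteq R(C)$. If $d_F(x,c)$ is even, then $x\in D(G)$; if it is odd, then $x\in A(G)$. Applying \cref{as2} to $F'$ in the same way gives: $d_{F'}(x,c')$ even implies $x\in D(G)$, and odd implies $x\in A(G)$.

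Now suppose the two parities differ, say $d_F(x,c)$ is even and $d_{F'}(x,c')$ is odd. Then $x\in D(G)\cap A(G)$, which is impossible. The symmetric case is handled the same way. Hence the parities agree, which is the claimed congruence. (The absolute values in the statement are harmless, since distances are nonnegative.)

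There is no real obstacle here; the only point to check is that \cref{as2} applies to an arbitrary flower with blossom $C$ and an arbitrary maximum matching. Its statement allows exactly that.
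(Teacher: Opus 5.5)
Your proposal is correct and is essentially the argument the paper intends: the paper states this as an immediate corollary of \cref{as2}. Even parity of $d_F(x,c)$ forces $x\in D(G)$ and odd parity forces $x\in A(G)$, independently of the chosen maximum matching and flower. Since $D(G)\cap A(G)=\emptyset$ by definition, the two parities must agree.
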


According to \cref{1s2}, under the same hypotheses we can
define the following sets: 
\begin{align*}
	R_{\text{odd}}(C) & := \left\{ x \in R(C) - V(C) : \left|d_F(x,c)\right| \equiv_2 1 \right\}, \\
	R_{\text{even}}(C) & := \left\{ x \in R(C) - V(C) : \left|d_F(x,c)\right| \equiv_2 0 \right\}.
\end{align*}
\noindent By \cref{1s2}, we have $R_{\text{odd}}(C) \cap R_{\text{even}}(C) = \emptyset$.
By \cref{as2}, $V(C) \cup R_{\text{even}}(C) \subseteq D(G)$, and $R_{\text{odd}}(C) \subseteq A(G)$.

\begin{lemma}
	Let $G$ be an $R$-disjoint graph, $C$ an odd cycle
	of $G$, and $x \in V(C) \cup R_{\text{even}}(C)$. Then $y \in R(C)$
	for every edge $xy \in E(G)$. 
\end{lemma}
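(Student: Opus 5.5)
We plan to exhibit, for each edge $xy$, an explicit $M'$-flower with blossom $C$ that contains $y$, for some maximum matching $M'$. Fix a maximum matching $M$ and an $M$-flower $F$ with blossom $C$, base $c$, stem $P$ and root $r$, such that $x\in V(F)$. If $y\in V(F)$ there is nothing to prove, so assume $y\notin V(F)$.

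\textbf{Step 1: reduce to the case where $x$ is the unsaturated vertex.} As in the first two items of \cref{as2}, we shift $M$ along the flower. If $x\in R_{\text{even}}(C)$, the subpath of $P$ from $r$ to $x$ is an even alternating path, so swapping it gives a maximum matching $M_1$. The part of $P$ from $x$ to $c$, together with $C$, is then an $M_1$-flower $F_1$ with root $x$ and the same blossom $C$. If $x\in V(C)$, we swap the whole stem and then rotate the blossom along the even alternating path inside $C$ from $c$ to $x$. This gives a maximum matching $M_1$ for which $C$ is an $M_1$-blossom with base $x$, and $F_1$ is the flower with trivial stem. In either case $M_1(x)=x$, $F_1$ has root $x$, $V(F_1)\subseteq V(F)$, and $y\notin V(F_1)$.

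\textbf{Step 2: extend the stem by $y$.} Since $M_1$ is maximum and $M_1(x)=x$, the vertex $y$ must be saturated by $M_1$. Otherwise $xy$ could be added to $M_1$, which is impossible. Let $z=M_1(y)$. Then $z\neq x$, because $x$ is unsaturated.
\begin{itemize}
\item If $z\notin V(F_1)$, set $M_2=M_1\triangle\{xy,yz\}$, which is again a maximum matching. The path $z,y,x$ followed by the stem of $F_1$ is an even $M_2$-alternating path from the $M_2$-unsaturated vertex $z$ to the base. Together with $C$ it forms an $M_2$-flower that contains $y$, so $y\in R(C)$.
\item If $z\in V(F_1)$, then $yz\in M_1$ with $y\notin V(F_1)$. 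But every vertex of $F_1$ other than the root $x$ is matched by $M_1$ inside $F_1$, so this case cannot occur.
\end{itemize}

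\textbf{Main obstacle.} The step needing the most care is Step 1: checking that the modified structure is still a genuine flower, i.e., that the base is the only vertex shared by the blossom and the new stem, and that the rotated blossom has the correct alternation. This is the same construction already used for the first two items of \cref{as2}. Once Step 1 holds, Step 2 is a routine alternating-path argument, and it uses only that $M_1$ is maximum.
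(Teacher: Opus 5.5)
Your proof is correct and follows essentially the same route as the paper. Both arguments shift the matching so that $x$ becomes the root of a flower with blossom $C$, observe that $y$ must be saturated, and replace $yM(y)$ by $xy$ to obtain a flower containing $y$; the paper phrases this as a contradiction, and your version is slightly more careful, since you check explicitly that $M_1(y)\notin V(F_1)$ and correctly take the new root to be $M_1(y)$ rather than $y$.
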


\begin{proof}
	By contradiction, suppose there exists an edge $xy \in E(G)$
	such that $y \notin R(C)$. Then there exists a maximum matching $M$ of
	$G$ and an $M$-flower $F$ of $G$ with base $c \in C$. Without loss
	of generality, we may assume that $x$ is the root of the flower, see
	\cref{Figura1}. Then $y$ is saturated by $M$, otherwise $x,y$ would
	form an augmenting path. We can then modify
	$M$ as follows:
	
	\[
	M' = (M - \{ yM(y) \}) \cup \{ xy \}.
	\]
	
	\noindent It is then easy to see that $G$ has an $M'$-flower
	with root $y$ and $M'$-blossom $C$. Hence $y \in R(C)$, which
	contradicts the initial assumption.
\end{proof}

\begin{lemma}\label{tas2}
	Let $G$ be an $R$-disjoint graph, $C$ an odd cycle
	of $G$, $x \in R(C)$, and $M$ a maximum matching of $G$. Then
	$y \in R(C)$ for every edge $xy \in M$. Furthermore,
	\begin{enumerate}
		\item If a vertex $v \in \partial(B(G))$, then $v \in A(G) \cup C(G),$
		\item If a vertex $v \in \partial(R(C))$, then $v \in A(G).$
	\end{enumerate}
\end{lemma}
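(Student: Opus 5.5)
We split $R(C)$ into $V(C)\cup R_{\text{even}}(C)$ and $R_{\text{odd}}(C)$, and use the lemma immediately preceding \cref{tas2}: every neighbour of a vertex of $V(C)\cup R_{\text{even}}(C)$ lies in $R(C)$.

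\textbf{Main claim.} Let $xy\in M$ with $x\in R(C)$. If $x\in V(C)\cup R_{\text{even}}(C)$, then $y\in N(x)\subseteq R(C)$ by that lemma, and we are done.

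So assume $x\in R_{\text{odd}}(C)$. Take a maximum matching $M'$ and an $M'$-flower $F$ with blossom $C$, base $c$ and root $r$, such that $x\in V(F)$ and $d_F(x,c)$ is odd. Then $x$ lies on the stem. Its stem-neighbour $z$ on the side towards $c$ is joined to $x$ by a non-matched edge, so $d_F(z,c)$ is even. Hence $z\in V(C)\cup R_{\text{even}}(C)\subseteq D(G)$, and $x\in A(G)$ by \cref{as2}.

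Now pass to the fixed maximum matching $M$. By \cref{ge}, $M$ matches $A(G)$ into $D(G)$, so $y=M(x)\in D(G)$. We want $y\in R(C)$. The approach is to build an $M$-flower (or to modify $M$) passing through $xy$.

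Concretely, by \cref{lovaszk} the component $D_y$ of $G[D(G)]$ containing $y$ is either an odd cycle or an isolated vertex.
\begin{itemize}
\item If $D_y$ is an odd cycle $C''$, then $C''\neq C$ would give $y\in R(C'')$. Using the flower through $x$, we would then try to show $x\in R(C'')$, contradicting disjointness. So $C''=C$ and $y\in V(C)\subseteq R(C)$.
\item If $y$ is isolated in $G[D(G)]$, then $y\in D(G)$ gives, by \cref{reach}, an $mn$-$M$ path $Q$ from $y$ to an $M$-unsaturated vertex $u$. We want to combine $Q$, the edge $yx$ and the flower structure at $x$. Set $M''=M\triangle Q$; this is a maximum matching leaving $y$ exposed. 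Then look for an $M''$-flower with blossom $C$ whose stem is $y\,x\,z\cdots c$, after adjusting $M''$ along the old stem so that the segment from $x$ to $c$ alternates correctly.
\end{itemize}

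\textbf{Main obstacle.} The hard step is this last construction: making the stem from $x$ to $c$ $M''$-alternating and disjoint from $Q$. My first attempt would be a symmetric-difference argument with $M'$: the component of $M\triangle M'$ through $x$ is an alternating path or cycle, and rerouting along it should transfer the flower to a maximum matching in which $xy$ is matched and the blossom is still $C$. Parity then forces $y$ into $R_{\text{even}}(C)$.

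\textbf{Items 1 and 2.} For item 2, let $v\in\partial(R(C))$, so $v$ has a neighbour outside $R(C)$. The lemma before \cref{tas2} excludes $v\in V(C)\cup R_{\text{even}}(C)$, so $v\in R_{\text{odd}}(C)\subseteq A(G)$.

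For item 1, let $v\in B(G)$ have a neighbour $w\in R(C)$. If $w\in V(C)\cup R_{\text{even}}(C)$, the same lemma would put $v$ in $R(C)$, which is false. Hence $w\in R_{\text{odd}}(C)\subseteq A(G)$. Suppose $v\in D(G)$. Then, by \cref{reach} and the first part, we would produce a flower reaching $v$: take a maximum matching missing $v$ and use $w$'s stem. This would give $v\in R(C)$, a contradiction, so $v\in A(G)\cup C(G)$.

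This contradiction argument is the same flower-extension step as in the main claim. So the whole lemma rests on one technical exchange argument, which I would prove once and apply in both places.
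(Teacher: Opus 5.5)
Your reductions are correct and follow the paper's organisation. The case $x\in V(C)\cup R_{\text{even}}(C)$ is handled by the preceding lemma. Otherwise $x\in A(G)$, so $y=M(x)\in D(G)$. Item 2 follows from the preceding lemma. You also correctly see that everything, item 1 included, hinges on one claim: no vertex $y\notin R(C)$ adjacent to some $x\in R_{\text{odd}}(C)$ lies in $D(G)$. The paper says the same: ``$y\in D(G)$ is impossible without relying on $xy\in M$''. Your odd-cycle subcase needs no ``trying'': if $y\in V(C'')$ with $C''\neq C$, the preceding lemma puts its neighbour $x$ into $R(C'')$.

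The genuine gap is that this key claim, which is the whole content of the lemma, is never proved. The construction you do give points the wrong way. Because $M(y)=x$, every $mn$-$M$ path from $y$ begins $y,x,q_2,\dots$, so $M\triangle Q$ matches $x$ to $q_2$. A stem $c\cdots z\,x\,y$, however, needs $zx$ matched. In the flower $zx=p_{i-1}p_i$ \emph{is} a matched edge, not an unmatched one as you wrote. Your fallback, rerouting the flower matching along the component of $M\triangle M'$ through $x$, can run through $C$ and the stem and destroy the flower. You do not say why this does not happen, nor where odd-cycle disjointness or $R$-disjointness enters, and in the paper that hypothesis is exactly what rules out the bad cases. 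Aiming for a flower in which $xy$ itself is matched is also more than you need. The same objection applies to item 1: a maximum matching missing $v$ need not carry any flower through $w$.

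The missing idea is to drop $M$ once $y\in D(G)$ is known, since membership in $D(G)$ does not depend on the matching, and to argue inside a single flower. Take a maximum matching $M'$ with an $M'$-flower of blossom $C$, stem $P=p_0(=c),p_1,\dots,p_n$ and $x=p_i$ with $i$ odd, and suppose $y\notin R(C)$.
\begin{itemize}
\item $y$ is $M'$-saturated, since otherwise $c,\dots,p_i,y$ is a stem. Now $M'(y)\neq x$, and \cref{reach} gives an $mn$-$M'$ path $Q=q_0(=y),\dots,q_t$ ending at an $M'$-exposed vertex.
\item If $Q$ avoids $V(C)\cup V(P)$, then $c,\dots,p_i,q_0,\dots,q_t$ is a stem through $y$.
\item Otherwise the first vertex $q_j$ of $Q$ in $V(C)\cup V(P)$ has $j$ even, so $p_i,q_0,\dots,q_j$ has odd length.
\item If $q_j\in V(C)\setminus\{c\}$, or $q_j=p_k$ with $k$ odd (including $k=i$), close this path by an even path along the stem and around $C$. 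This gives an odd cycle different from $C$ that meets $V(C)$ or contains $x\in R(C)$, contradicting the hypothesis.
\item If $q_j=p_k$ with $k$ even and $k>i$, then $c,\dots,p_i,q_0,\dots,q_j=p_k,\dots,p_n$ is an $M'$-stem through $y$.
\item If $k$ is even and $k<i$ (this includes $q_j=c$), then $p_k,\dots,p_i,q_0,\dots,q_j$ is an even $M'$-alternating cycle. Rotating $M'$ along it gives a flower with blossom $C$ and stem $p_0,\dots,p_k,q_{j-1},\dots,q_0,p_i,\dots,p_n$.
\end{itemize}
In every case $y\in R(C)$, a contradiction. The argument never uses $xy\in M$, so it proves item 1 as well, with $(w,v)$ in place of $(x,y)$.
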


\begin{proof}
	By contradiction, suppose there exists an edge $xy \in M$
	such that $y \notin R(C)$. By \cref{as2}, $x \in R_{\text{odd}}(C)$,
	thus $x \in A(G)$. Therefore, by \cref{ge}, $y \in D(G)$.
	If $y \in R(C')$ for some odd cycle $C'$ of $G$ distinct
	from $C$, then by \cref{as2}, $y \in A(G)$, which is impossible,
	so $y \in B(G)$. To finish, we show that $y \in D(G)$ is impossible without relying on $xy \in M$;
	this completes the proof.
	
	Since $x \in R_{\text{odd}}(C)$, there exists a maximum matching $M$ of
	$G$ such that $x$ is in an $M$-flower $F$ with $M$-blossom $C = (c_0 = c), c_1, \dots, (c_k = c)$
	and base $c \in V(C)$. Let $P = (p_0 = c), p_1, \dots, p_n$ be the $M$-stem
	of the flower. Then $x = p_i$ for some odd $i \in \{1, \dots, n\}$.
	On the other hand, $y$ is saturated by $M$, since otherwise $y \in R(C)$,
	as $C$ together with the path $p_0, \dots, p_i, y$ would form an $M$-flower
	with $M$-blossom $C$ containing $y$. Hence, since $y \in D(G)$,
	by \cref{reach}, there exists an $mn$-$M$ path $Q = q_0, \dots, q_t$ starting at $y$ and ending at a vertex unsaturated by $M$. 
	
	If $V(Q) \cap (V(C) \cup V(P)) = \emptyset$, then $Q$ together
	with $C$ and the path $p_0, \dots, p_i, y$ is an $M$-flower with
	$M$-blossom $C$ containing $y$, a contradiction. Let $q_j \in V(Q)$
	be the first vertex of $Q$ in $V(C) \cup V(P)$. Since $Q$ is an $mn$-$M$ path,
	$j$ must be even. Consider the following cases:
	
	$ $
	
	\textbf{Case 1.} If $q_j = p_i$, then $C' = q_0, \dots, q_j, q_0$
	is an odd cycle containing $p_i$, so $R(C) \cap R(C') \neq \emptyset$,
	where $C$ and $C'$ are distinct cycles of $G$, a contradiction (see
	\cref{Figura2}).

	\begin{figure}[h!]
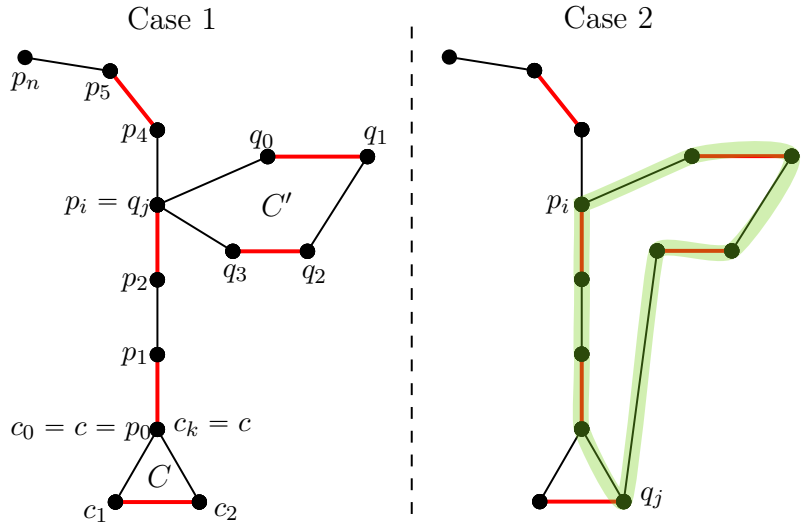

		%Tikz
		
		\begin{center}

			\tikzset{every picture/.style={line width=0.75pt}} %set default line width to 0.75pt        
			
			% [inline block 2: 1 envs, 32334 chars -> data_tex | \begin{tikzpicture}[x=0.75pt,y=0.75pt,yscale=-1,xscale=1] 				%uncomment if require: \path (0,318); %set diagram left st...]


		\end{center}
		\caption{Illustration of the proof of Cases 1 and 2 in \cref{tas2}.
		}
		\label{Figura2}
	\end{figure}
	
\textbf{Case 2.} If $q_{j}\in V(C)$, consider a path of even length between $p_{i}$ and $q_{j}$ contained in $G[V(C)\cup V(P)]$. This path together with the path $p_{i},q_{0},\dots,q_{j}$ forms an odd cycle that intersects $C$, a contradiction (see \cref{Figura2}).

$ $

Now suppose that $q_{j}\in V(P)-V(C)$, that is, $q_{j}=p_{k}$ for some $k\in\{1,\dots,n\}$.

$ $

\textbf{Case 3.} If $k$ is odd, similarly to Case 2, it is easy to obtain an odd cycle containing $p_{k}$ distinct from $C$, a contradiction (see \cref{Figura3}).

	\begin{figure}[h!]
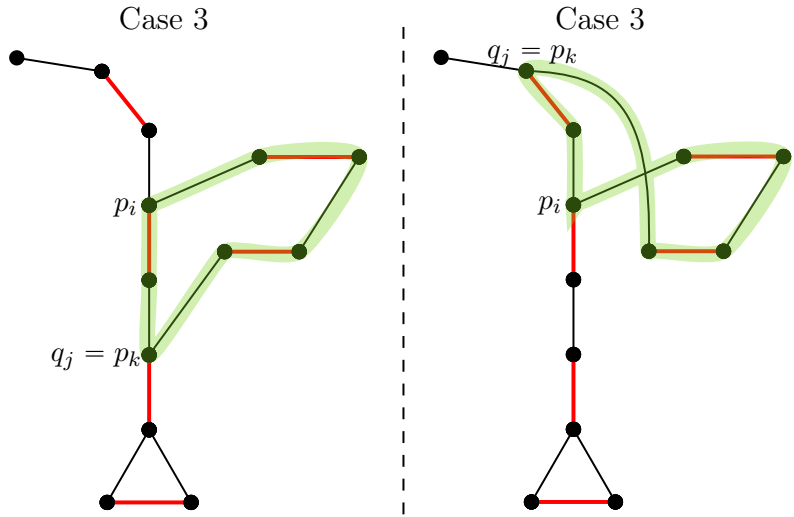

		%Tikz
		
		\begin{center}

			\tikzset{every picture/.style={line width=0.75pt}} %set default line width to 0.75pt        
			
			% [inline block 3: 2 envs, 64673 chars in 2 pieces, piece 1 here, a bare % at each other -> data_tex | \begin{tikzpicture}[x=0.75pt,y=0.75pt,yscale=-1,xscale=1] 				%uncomment if require: \path (0,318); %set diagram left st...]


		\end{center}
		\caption{Illustration of the proof of Case 3 in \cref{tas2}.
		}
		\label{Figura3}
	\end{figure}
\textbf{Case 4.} If $k$ is even and $k<j$, we can construct an $M$-alternating cycle such that, by rotating the matching $M$ along the cycle, we obtain a matching $M^{\prime}$ and an $M^{\prime}$-flower with $M^{\prime}$-blossom $C$ containing $q_{0}$, which is absurd (see \cref{Figura4}).

	\begin{figure}[h!]
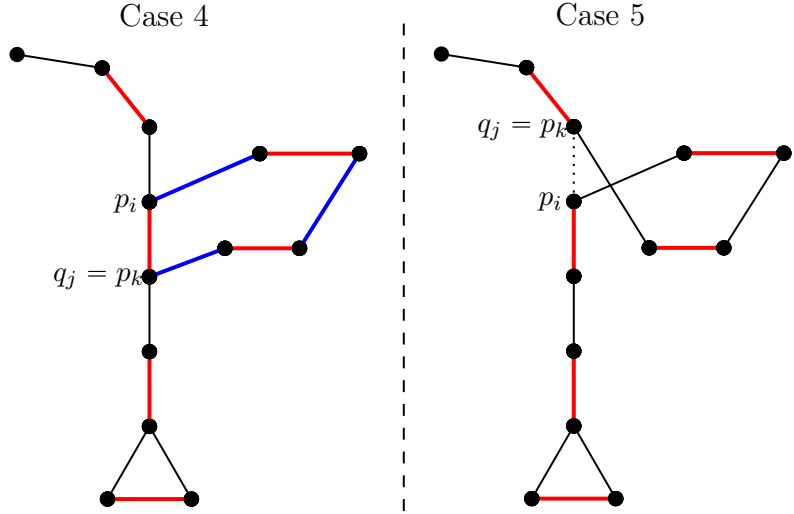

		%Tikz
		
		\begin{center}

			\tikzset{every picture/.style={line width=0.75pt}} %set default line width to 0.75pt        
			
			%

		\end{center}
		\caption{Illustration of the proof of Cases 4 and 5 of \cref{tas2}.
		}
		\label{Figura4}
	\end{figure}
	
	\textbf{Case 5.} If $k$ is even and $k > j$, there exists an $M$-flower with $M$-blossom $C$ containing $q_0$, which is absurd (see \cref{Figura4}).
	
\end{proof}

In other words, the edges connecting two distinct sets
in the flower decomposition of an $R$-disjoint graph never belong
to a maximum matching of the graph. As an immediate consequence, we obtain
the $\mu$-additivity of this decomposition.

\begin{theorem}
	Let $G$ be an $R$-disjoint graph. Then $M$ is
	a maximum matching of $G$ if and only if the restriction of $M$ to each
	set in the flower decomposition of $G$ is a maximum matching
	in the corresponding induced subgraph.
\end{theorem}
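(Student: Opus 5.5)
The proof rests on one counting identity. Write the flower decomposition as $V_0=B(G)$ together with $V_1=R(C_1),\dots,V_k=R(C_k)$. For any matching $M$ of $G$, let $M_i$ be its restriction to $G[V_i]$, and let $M_\partial$ be the edges of $M$ joining two distinct parts. Then $|M|=\sum_i |M_i| + |M_\partial|\le \sum_i \mu(G[V_i]) + |M_\partial|$. Conversely, any family of matchings $N_i$ of the $G[V_i]$ has union a matching of $G$, since the parts are disjoint. Hence
\[
\mu(G)\ \ge\ \sum_{i=0}^{k}\mu(G[V_i]).
\]
Both directions of the theorem then follow once the reverse inequality $\mu(G)\le\sum_i\mu(G[V_i])$ is established.

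For the reverse inequality, take any maximum matching $M$ of $G$. The first assertion of \cref{tas2} says that if $x\in R(C)$ and $xy\in M$, then $y\in R(C)$. So no edge of $M$ leaves any $R(C_i)$, and consequently no edge of $M$ joins $B(G)$ to some $R(C_i)$ either. Therefore $M_\partial=\emptyset$ and $M=\bigcup_i M_i$. This gives $\mu(G)=|M|=\sum_i|M_i|\le\sum_i\mu(G[V_i])$, and combined with the first paragraph, $\mu(G)=\sum_i\mu(G[V_i])$.

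With equality in hand, both directions are immediate.
\begin{itemize}
\item ($\Rightarrow$) If $M$ is maximum, then $M_\partial=\emptyset$ and $\sum_i|M_i|=\sum_i\mu(G[V_i])$. Since $|M_i|\le\mu(G[V_i])$ for every $i$, each inequality must be an equality, so every $M_i$ is a maximum matching of $G[V_i]$.
\item ($\Leftarrow$) This direction needs a reading of the hypothesis. Taken literally, a matching $M$ could restrict to maximum matchings on every part and still have extra crossing edges; but then $|M|>\mu(G)$, which is impossible. So $M_\partial=\emptyset$ automatically, and $|M|=\sum_i\mu(G[V_i])=\mu(G)$, so $M$ is maximum. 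Equivalently, a union of maximum matchings of the parts is a maximum matching of $G$.
\end{itemize}

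I do not expect a real obstacle here; all the work is in \cref{tas2}. The only point to check carefully is the case where $M$ has an edge $xy$ with $x\in B(G)$ and $y\in R(C)$. This case is covered by applying \cref{tas2} to the endpoint $y$, not $x$, because the lemma only constrains $M$-edges that have an endpoint inside some $R(C)$.
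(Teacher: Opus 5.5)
Your proposal is correct and follows the paper's route. The paper states this theorem without a written proof, as an immediate consequence of the first assertion of \cref{tas2}: no edge of a maximum matching joins two distinct parts of the flower decomposition. Your argument supplies the counting the paper leaves implicit, namely $\mu(G)=\sum_i\mu(G[V_i])$ and the observation that a crossing edge in the ($\Leftarrow$) direction would force $|M|>\mu(G)$.
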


\begin{theorem}
	Let $G$ be an $R$-disjoint graph. Then
	\[
	\mu\left(G\right)=\mu\left(B(G)\right)+\underset{C}{\sm}\mu\left(G[R(C)]\right),
	\]
	\noindent where the sum is over all odd cycles $C$ of
	the graph $G$. 
\end{theorem}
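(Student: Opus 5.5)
The formula follows almost immediately from the previous theorem (the $\mu$-additivity of the flower decomposition). I will read $\mu(B(G))$ as $\mu(G[B(G)])$. The plan is to show the two inequalities separately, using that the sets $B(G)$ and $R(C)$, for $C$ ranging over the odd cycles of $G$, partition $V(G)$.

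For the inequality $\ge$, I choose for each odd cycle $C$ a maximum matching $M_C$ of $G[R(C)]$, and a maximum matching $M_B$ of $G[B(G)]$. The parts of the flower decomposition are pairwise disjoint, so the union $M=M_B\cup\bigcup_C M_C$ is a matching of $G$. Hence
\[
\mu(G)\ge |M| = \mu(G[B(G)])+\sum_C \mu(G[R(C)]).
\]
Equivalently, the previous theorem in its "if" direction says this $M$ is a maximum matching, which gives equality directly.

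For the inequality $\le$, I take any maximum matching $M$ of $G$. By \cref{tas2}, every edge of $M$ with one endpoint in some $R(C)$ has its other endpoint in the same $R(C)$. Consequently no edge of $M$ joins two different parts of the decomposition: neither $R(C)$ to $R(C')$, nor $R(C)$ to $B(G)$. So $M$ splits as the disjoint union of its restrictions $M\cap E(G[R(C)])$ and $M\cap E(G[B(G)])$. Each restriction is a matching in the corresponding induced subgraph, so
\[
|M|\le \mu(G[B(G)])+\sum_C\mu(G[R(C)]).
\]

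There is no real obstacle here. The only substantive input is the edge-confinement property of \cref{tas2}, which is already proved, so the result is an immediate corollary of the preceding theorem.
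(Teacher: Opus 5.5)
Your proof is correct and is essentially the paper's argument. The paper states this theorem as an immediate consequence of \cref{tas2}, namely that no edge of a maximum matching joins two different parts of the flower decomposition. Your two inequalities spell out what that ``immediate'' hides: gluing maximum matchings of the vertex-disjoint parts gives $\ge$, and splitting an arbitrary maximum matching along the parts gives $\le$.
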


Then, by \cref{safe}, \cref{observ} takes the following form. Since an $M$-flower of $G$ with $M$-blossom $C$ is also
an $M^{\P}$-flower of $G[R(C)]$ with $M^{\P}$-blossom $C$, where
$M^{\P}=M\ii E(G[R(C)])$.

\begin{lemma}\label{obsthm}
	If $G$ is an $R$-disjoint graph and $C$ is an odd
	cycle of $G$, then the following statements hold:
	\begin{itemize}
		\item $G[R(C)]$ is an almost bipartite non-König--Egerváry graph.
		\item $G[B(G)]$ is a bipartite graph.
		\item $G[R(C)\u B(G)]$ is an almost bipartite non-König--Egerváry
		graph.
	\end{itemize}
\end{lemma}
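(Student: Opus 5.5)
The three items split into ``almost bipartite'' parts, which are pure cycle counting, and ``non-König--Egerváry'' parts, which I would get from \cref{safe} by carrying a flower of $G$ over to the subgraph.

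\textbf{Almost bipartite parts.} By \cref{observ}, $G[R(C)]$ is almost bipartite and $G[B(G)]$ is bipartite; the latter is the second item. For $H=G[R(C)\cup B(G)]$, every odd cycle of $H$ is an odd cycle of $G$. Each odd cycle $C'$ of $G$ satisfies $V(C')\subseteq R(C')$, and the sets $R(C')$ are pairwise disjoint and disjoint from $B(G)$. So if $C'\neq C$, then $C'$ lies entirely in $R(C')$, which is disjoint from $R(C)\cup B(G)$, and hence $C'$ is not a cycle of $H$. Thus $C$ is the unique odd cycle of $H$. The same argument gives that $C$ is the unique odd cycle of $G[R(C)]$.

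\textbf{Non-König--Egerváry parts.} Since $R(C)\neq\emptyset$, there is a maximum matching $M$ of $G$ and an $M$-flower $F$ with blossom $C$. By definition of $R(C)$, $V(F)\subseteq R(C)$. Put $M'=M\cap E(G[R(C)])$; by the $\mu$-additivity theorem above, $M'$ is a maximum matching of $G[R(C)]$. I then check that $F$ is an $M'$-flower in $G[R(C)]$:
\begin{itemize}
\item The edges of $F$ lying in $M$ have both ends in $R(C)$, so they lie in $M'$.
\item The root and the base of $F$ are $M$-unsaturated, hence $M'$-unsaturated.
\item Every other vertex of $F$ is $M$-saturated by an edge of $F$, and that edge lies in $M'$.
\end{itemize}
So the alternation pattern is unchanged, and \cref{safe} (third condition) shows that $G[R(C)]$ is non-König--Egerváry.

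For $H$, take $M''=M\cap E(H)$. The same check shows $F$ is an $M''$-flower. It remains to see that $M''$ is maximum in $H$. By \cref{tas2}, no edge of $M$ joins different parts of the flower decomposition, so $M''$ is the union of $M'$ and $M\cap E(G[B(G)])$. The latter is maximum in $G[B(G)]$ by the $\mu$-additivity theorem.

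\textbf{Main obstacle.} The real difficulty is proving $\mu(H)=\mu(G[R(C)])+\mu(G[B(G)])$, since $H$ may have edges between $R(C)$ and $B(G)$ that could in principle enlarge a matching. I would argue as follows. Take a maximum matching $N$ of $H$ and extend it by maximum matchings of each $G[R(C')]$ with $C'\neq C$. Since $H$ has no vertices in those sets, this gives a matching of $G$ of size $\mu(H)+\sum_{C'\neq C}\mu(G[R(C')])$. This size is at most $\mu(G)$, which by the $\mu$-additivity theorem equals $\mu(G[B(G)])+\sum_{C'}\mu(G[R(C')])$. Cancelling the common terms gives $\mu(H)\le\mu(G[R(C)])+\mu(G[B(G)])=|M''|$, so $M''$ is maximum in $H$. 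Then \cref{safe} shows that $H$ is non-König--Egerváry.
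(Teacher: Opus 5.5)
Your proposal is correct and follows the paper's route: restrict a maximum matching $M$ carrying a $C$-flower to the induced subgraph, use \cref{tas2} and $\mu$-additivity to see the restriction is maximum, and apply \cref{safe}. Your explicit bound $\mu(G[R(C)\cup B(G)])\le\mu(G[R(C)])+\mu(G[B(G)])$ usefully fills in the third item, which the paper leaves implicit. One harmless slip: the base of $F$ is $M$-unsaturated only when the stem is trivial; otherwise it is matched by the first stem edge, which lies in $F$ and hence in $M'$, so the conclusion is unaffected.
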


\begin{corollary}\label{obscor}
	If $G$ is an $R$-disjoint graph, then $D\left(G\right)=D\left(B(G)\right)\u\underset{C}{\U}D\left(G[R(C)]\right),$
	$A\left(G\right)=A\left(B(G)\right)\u\underset{C}{\U}A\left(G[R(C)]\right),$
	and $C\left(G\right)=C\left(B(G)\right)$, where the union is over all
	odd cycles $C$ of the graph $G$. 
\end{corollary}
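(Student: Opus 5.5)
The plan is to read everything off the $\mu$-additivity theorem (the characterization of maximum matchings via the flower decomposition) together with \cref{tas2} and \cref{as2}. Throughout, write $H$ for any member of the flower decomposition, meaning either $G[B(G)]$ or some $G[R(C)]$. For a vertex $v\in V(H)$, the statement $v\in D(H)$ refers to the graph $H$.

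\textbf{Step 1: the $D$-part.} Suppose $v\in D(G)$, and let $M$ be a maximum matching of $G$ missing $v$. By the $\mu$-additivity theorem, $M\cap E(H)$ is a maximum matching of $H$, and it misses $v$. Hence $v\in D(H)$.

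Conversely, suppose $v\in D(H)$, and let $N$ be a maximum matching of $H$ missing $v$. Take the union of $N$ with arbitrary maximum matchings of the other members of the decomposition. By the same theorem, this union is a maximum matching of $G$, and it misses $v$. Hence $v\in D(G)$.

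Since the members of the decomposition partition $V(G)$, this gives $D(G)\cap V(H)=D(H)$ for every $H$. The first equality follows.

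\textbf{Step 2: cross edges avoid $D(G)$.} Let $uv$ be an edge with $u\in V(H)$ and $v\in V(H')$ for distinct members $H,H'$ of the decomposition. Then $u\in\partial(V(H))$ and $v\in\partial(V(H'))$. By \cref{tas2}, both $u$ and $v$ lie in $A(G)\cup C(G)$, so neither lies in $D(G)$.

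Consequently, for $v\in V(H)$, the neighbours of $v$ that lie in $D(G)$ are exactly its neighbours inside $H$ that lie in $D(G)\cap V(H)=D(H)$. Combining this with Step 1:
\begin{itemize}
\item $v\in A(G)$ holds if and only if $v\notin D(G)$ and $v$ has a neighbour in $D(G)$;
\item this holds if and only if $v\notin D(H)$ and $v$ has a neighbour in $D(H)$;
\item that is, if and only if $v\in A(H)$.
\end{itemize}
This proves the second equality. Taking complements inside each $V(H)$ then gives $C(G)\cap V(H)=C(H)$.

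\textbf{Step 3: $C(G[R(C)])$ is empty.} It remains to show $C(G[R(C)])=\emptyset$, because then $C(G)=C(B(G))$. By \cref{as2}, every vertex of $R(C)$ lies in $V(C)\cup R_{\text{even}}(C)\subseteq D(G)$ or in $R_{\text{odd}}(C)\subseteq A(G)$. Hence $C(G)\cap R(C)=\emptyset$, and by the complement statement of Step 2, $C(G[R(C)])=C(G)\cap R(C)=\emptyset$.

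\textbf{Expected obstacle.} The only delicate point is Step 2, which needs that no cross edge has an endpoint in $D(G)$. This is exactly what \cref{tas2} provides, via $\partial(R(C))\subseteq A(G)$ and $\partial(B(G))\subseteq A(G)\cup C(G)$. The rest is bookkeeping.
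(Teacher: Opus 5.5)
Your proof is correct and follows the route the paper intends: the paper states this corollary without proof, as a consequence of the $\mu$-additivity theorem, with \cref{tas2} ensuring that no edge between different parts of the flower decomposition touches $D(G)$ (so the $A$-sets localize) and \cref{as2} giving $R(C)\subseteq D(G)\cup A(G)$ (so $C(G)$ lies in $B(G)$). You have simply written out the bookkeeping the paper leaves implicit.
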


We can then rewrite the sets as
\begin{align*}
	R_{\text{odd}}(C) & =R(C)\ii A(G),\\
	R_{\text{even}}(C)\u V(C) & =R(C)\ii D(G).
\end{align*}

\section{Independent and critical sets in R-disjoint graphs}\label{sind}

The following Lemma is a straightforward application of Hall's Theorem.

\begin{lemma}
	[\cite{larson2007note}] Let $G$ be a graph and $I$ a
	critical independent set of $G$. Then there exists a maximum matching
	of $G$ that matches the vertices $N(S)$ into (a subset of) the
	vertices of $I$.
\end{lemma}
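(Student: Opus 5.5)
The plan is to apply Hall's Theorem to the bipartite graph $H$ with sides $N(I)$ and $I$, whose edges are the edges of $G$ between these two sets. (The statement writes $N(S)$; we read it as $N(I)$.)

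\textbf{Step 1: Hall's condition.} First we show that $H$ has a matching saturating $N(I)$. It suffices to check that $|N_H(X)|\ge |X|$ for every $X\subseteq N(I)$. Suppose some $X\subseteq N(I)$ has $|N(X)\cap I|<|X|$. Put $I' = I\setminus N(X)$. This set is still independent, and
\[
N(I')\subseteq N(I)\setminus X .
\]
Indeed, if $v\in X$ had a neighbour in $I'$, that neighbour would lie in $N(X)$, contradicting the definition of $I'$. Hence
\[
d_G(I') \ge |I|-|N(X)\cap I| - |N(I)| + |X| > d_G(I),
\]
which contradicts the criticality of $I$ (recall $d_I(G)$ is the maximum over independent sets). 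So Hall's condition holds, and we obtain a matching $M_0$ of $N(I)$ into $I$.

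\textbf{Step 2: extending to a maximum matching of $G$.} This is the main point. Let $Y = I\cup N(I)$. We extend $M_0$ by a maximum matching $M_1$ of $G - Y$, and we want to show that $M=M_0\cup M_1$ is maximum in $G$.

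Take any matching $M^*$ of $G$. Its edges incident to $Y$ number at most $|N(I)|$, because $I$ is independent, so every such edge has an endpoint in $N(I)$. Its remaining edges form a matching of $G-Y$, so there are at most $\mu(G-Y)$ of them. Therefore
\[
|M^*|\le |N(I)| + \mu(G-Y) = |M| ,
\]
so $M$ is maximum. It matches $N(I)$ into a subset of $I$, as required.

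The only delicate point is the counting in Step 2, namely the observation that each edge touching $Y$ uses a vertex of $N(I)$. Once that is noted, the argument is routine.
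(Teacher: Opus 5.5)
Your proof is correct and follows the route the paper indicates; the paper gives no proof of its own, only the remark that the lemma is a straightforward application of Hall's Theorem (citing Larson), and you also rightly read $N(S)$ as $N(I)$. Your Hall step, which passes from a violating $X$ to the set $I\setminus N(X)$ of larger difference, is the standard argument, and your counting with $Y=I\cup N(I)$ rigorously justifies the extension to a maximum matching of $G$ that the paper leaves implicit.
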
 

\begin{lemma}\label{indD}
	For any graph $G$, there exists a critical independent
	set $I$ of $G$ (possibly empty) such that $I\s D(G)$. 
\end{lemma}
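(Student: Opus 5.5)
Start from an arbitrary critical independent set $J$ of $G$ and show that $I:=J\cap D(G)$ is again critical. The tools are the Gallai--Edmonds decomposition (\cref{ge}) and the preceding lemma of \cite{larson2007note}: there is a maximum matching $M$ of $G$ that matches $N(J)$ into $J$. If $J=\emptyset$ there is nothing to prove, so assume $J\neq\emptyset$. It suffices to show that deleting $J\setminus D(G)$ from $J$ does not decrease the difference, i.e.
\[
d_G(J\cap D(G))\ge d_G(J).
\]
Since $J$ is critical, this forces equality. The empty case is consistent with this: if $d_I(G)=0$ then $\emptyset$ is critical.

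\textbf{Step 1: $J\cap A(G)=\emptyset$.} Suppose $a\in J\cap A(G)$. By \cref{ge}, every maximum matching, in particular $M$, matches $a$ to some $u\in D(G)$. Then $u\in N(J)$, so $M$ matches $u$ into $J$, i.e. $M(u)\in J$. But $M(u)=a$, so $a$ and $u$ would both lie in $J$ while being adjacent, contradicting independence of $J$. Hence $J\subseteq D(G)\cup C(G)$.

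\textbf{Step 2: removing $J_C:=J\cap C(G)$ does not hurt.} Every vertex $x\in N(J_C)$ is matched by $M$ into $J$. The vertices of $C(G)$ have no neighbours in $D(G)$ (by the definition of $A(G)$), and by \cref{ge} $M$ matches $C(G)$ perfectly within itself. It follows that $N(J_C)\subseteq A(G)\cup C(G)$, and that $M$ maps $N(J_C)\cap C(G)$ into $J_C$.

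The key inequality to establish is
\[
|J_C|\le |N(J)\setminus N(J\cap D(G))|,
\]
i.e. that $J_C$ contributes no surplus. For the part inside $C(G)$: $M$ restricted to $C(G)$ is perfect, so $M$ maps $J_C$ injectively into $N(J_C)\cap C(G)$. These images are vertices of $N(J)$, and they are not neighbours of $J\cap D(G)$, because $C(G)$ has no neighbours in $D(G)$. This gives $|J_C|\le|N(J)\setminus N(J\cap D(G))|$, and hence
\[
d_G(J\cap D(G))=|J|-|J_C|-|N(J\cap D(G))|\ge |J|-|N(J)|=d_G(J).
\]

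\textbf{Main obstacle.} The delicate point is Step 1. The larson lemma gives a single maximum matching $M$ mapping $N(J)$ into $J$, and the argument must combine this with the fact from \cref{ge} that every maximum matching, $M$ included, matches $A(G)$ into $D(G)$. Step 2 then only needs perfectness of $M$ on $C(G)$ and the absence of $C(G)$--$D(G)$ edges, so I expect it to be routine once Step 1 is established.
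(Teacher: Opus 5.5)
\textbf{The gap is in Step 1: the contradiction you derive there does not exist.} Larson's lemma says that $M$ maps $N(J)$ into $J$. Your vertex $u=M(a)$ lies in $N(J)$, and $M(u)=a\in J$ is exactly what that lemma predicts. You have not produced two adjacent vertices of $J$. In fact $u\notin J$, since $u$ is adjacent to $a\in J$ and $J$ is independent.

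So $J\cap A(G)=\emptyset$ is unproved, and Step 2 cannot absorb this case. Step 2 only accounts for $J\cap C(G)$, using that $C(G)$ has no neighbours in $D(G)$. A vertex of $J\cap A(G)$, by contrast, is matched into $D(G)$, possibly to a neighbour of $J\cap D(G)$. Hence $d_G(J\cap D(G))\ge d_G(J)$ is not established as written.

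\textbf{The claim is true, but it needs an alternating-path argument that uses $u\in D(G)$, not just the single edge $ua$.} Since $u\in D(G)$ and $M(u)=a\neq u$, \cref{reach} gives an $mn$-$M$ path $u=p_0,p_1=a,p_2,\dots,p_{2m}$ with $m\ge 1$ and $p_{2m}$ exposed by $M$. Argue by induction along the path:
\begin{itemize}
\item If $p_{2i+1}\in J$, then $p_{2i+2}\in N(J)$, because it is adjacent to a vertex of $J$ and $J$ is independent.
\item If moreover $2i+2<2m$, then $p_{2i+3}=M(p_{2i+2})\in J$, because $M$ matches $N(J)$ into $J$.
\end{itemize}
Hence the exposed endpoint $p_{2m}$ lies in $N(J)$, contradicting that $M$ saturates $N(J)$. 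The same argument, started at any $v\in N(J)\cap D(G)$, shows $N(J)\cap D(G)=\emptyset$. This gives Step 1 at once, since every vertex of $A(G)$ has a neighbour in $D(G)$.

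With this repair, Step 2 is correct and the proof is complete. It then follows a genuinely different route from the paper. The paper repeatedly deletes $N_I(S)$ for tight sets $S\subseteq N(I)$ until Hall's condition holds with surplus, and then uses Hall's theorem to show that each remaining vertex is missed by some maximum matching. Your Gallai--Edmonds argument instead shows directly that $J\cap D(G)$ is critical for \emph{every} critical independent set $J$.
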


\begin{proof}
	Let $I$ be a critical independent set of $G$.
	If there exists a set $\emptyset\neq S\s N(I)$ such that $\left|N_{I}(S)\right|=\left|S\right|$,
	then we define $I^{\P}=I-N_{I}(S)$. Now $I^{\P}$ is a critical
	independent set of $G$, since 
	
	\begin{align*}
		\left|I^{\P}\right|-\left|N(I^{\p})\right| & =\left|I-N_{I}(S)\right|-\left|N(I^{\p})\right|\\
		& =\left|I\right|-\left|N_{I}(S)\right|-\left|N(I)-S\right|\\
		& =\left|I\right|-\left|N_{I}(S)\right|-\left|N(I)\right|+\left|S\right|\\
		& =\left|I\right|-\left|N(I)\right|.
	\end{align*}
	
	\noindent If there exists a set $\emptyset\neq T\s N(I)$ such that
	$\left|N_{I^{\P}}(T)\right|=\left|T\right|$, we repeat the argument.
	Thus we may assume that $\left|N_{I}(S)\right|>\left|S\right|$
	for all $\emptyset\neq S\s N(I)$. Then, by Hall's Theorem,
	there exists a matching of the vertices of $S$ into the vertices of $I-x$
	for every $x\in V(G)$. This matching can be extended to a maximum
	matching in $G$. Therefore $x\in D$ and $I\s D$.
\end{proof} 

By \cref{obscor} and \cref{obsthm}, we obtain the following.

\begin{corollary}
	Let $G$ be an $R$-disjoint graph and $C$ an odd cycle
	of $G$. Then there exists a critical independent set $S$
	of $G[R(C)]$ such that $S\s D(G)$.
\end{corollary}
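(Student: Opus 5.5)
The plan is to apply \cref{indD} to the graph $H:=G[R(C)]$ and then transfer membership in $D(H)$ to membership in $D(G)$ using \cref{obscor}.

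First, by \cref{indD} applied to $H$, there exists a critical independent set $S$ of $H$, possibly empty, with $S\s D(H)$. This step is purely formal, since \cref{indD} holds for every graph.

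Second, I would show that $D(H)\s D(G)$. By \cref{obscor},
\[
D(G)=D(B(G))\u\U_{C'}D(G[R(C')]),
\]
so in particular $D(G[R(C)])\s D(G)$. Combining the two steps gives $S\s D(H)\s D(G)$, which is the desired conclusion.

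Two points need care.

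\textbf{The inclusion $D(H)\s D(G)$ itself.} If one does not want to quote \cref{obscor} as a black box, it can be derived from the $\mu$-additivity theorem of \cref{smat}. Take $v\in D(H)$ and a maximum matching $M_H$ of $H$ missing $v$. Extend $M_H$ by maximum matchings of $G[B(G)]$ and of each $G[R(C')]$ with $C'\neq C$. By the characterization of maximum matchings through their restrictions to the flower decomposition, the result is a maximum matching of $G$, and it still misses $v$. Hence $v\in D(G)$. This is the only point where the structure of $R$-disjoint graphs is used.

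\textbf{The empty-set degenerate case.} If $S$ turns out to be empty, the statement holds trivially. It is nonetheless worth noting that $H$ is non-König--Egerváry by \cref{obsthm}, since a nonempty $S$ is what the later arguments will likely need. I would not try to force $S\neq\emptyset$ here, because the statement allows the empty set.
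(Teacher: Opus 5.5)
Your proposal is correct and follows the paper's route: apply \cref{indD} to $G[R(C)]$ to get a critical independent set inside $D(G[R(C)])$, then use \cref{obscor} to conclude $D(G[R(C)])\subseteq D(G)$. Your optional rederivation of that inclusion from the $\mu$-additivity of the flower decomposition is also sound, though not needed.
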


\begin{lemma}\label{hola}
	Let $G$ be an $R$-disjoint graph and $C$ an odd cycle
	of $G$. Then there exists an independent set $S\in\Omega(G[R(C)])$
	such that $S\s D(G)$.
\end{lemma}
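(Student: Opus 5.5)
The idea is to write down an explicit candidate $S\subseteq D(G)\cap R(C)$ and show it is maximum in $H:=G[R(C)]$ by a counting argument that uses the fact that $H$ is non-König--Egerváry.

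\textbf{Step 1: the structure of $H$.} By \cref{obscor}, $D(G)\cap R(C)=D(H)$, $A(G)\cap R(C)=A(H)$, and $C(G)=C(B(G))$ is disjoint from $R(C)$. Together with the rewriting $R(C)\cap D(G)=V(C)\cup R_{\text{even}}(C)$ and $R(C)\cap A(G)=R_{\text{odd}}(C)$, this gives the partition
\[
V(H)=V(C)\cup R_{\text{even}}(C)\cup R_{\text{odd}}(C),
\]
with $D(H)=V(C)\cup R_{\text{even}}(C)$ and $A(H)=R_{\text{odd}}(C)$. The graph $H$ is odd cycle disjoint, since $C$ is its only odd cycle. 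Hence, by \cref{lovaszk}, every component of $H[D(H)]$ is an isolated vertex or an odd cycle. The only possible odd cycle is $C$, so every vertex of $R_{\text{even}}(C)$ is isolated in $H[D(H)]$. In particular, $R_{\text{even}}(C)$ is independent and has no edges to $V(C)$.

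\textbf{Step 2: the candidate set.} Let $J$ be an independent set of $C$ of size $(|C|-1)/2$, and put $S:=R_{\text{even}}(C)\cup J$. By Step 1, $S$ is independent, and $S\subseteq D(H)\subseteq D(G)$. Its size is $|S|=|R_{\text{even}}(C)|+(|C|-1)/2$.

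\textbf{Step 3: computing $\mu(H)$.} Let $M$ be a maximum matching of $H$. By \cref{ge}, $M$ restricted to the factor-critical component $C$ is near-perfect, so it contributes $(|C|-1)/2$ edges. Every other edge of $M$ joins a vertex of $A(H)$ to some component of $H[D(H)]$. Also, \cref{ge} says all of $A(H)$ is matched, each vertex into a distinct component. Since $C$ already has its own near-perfect matching, edges from $A(H)$ into $C$ must be handled with some care. In every case, counting the edges of $M$ gives
\[
\mu(H)=\frac{|C|-1}{2}+|R_{\text{odd}}(C)|.
\]
This count is the step I expect to need the most checking. The alternative accounting, where some vertex of $A(H)$ is matched to the base of $C$, leaves $C$ with one vertex fewer inside the component but adds one $A$--$D$ edge, so the total is the same.

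\textbf{Step 4: conclusion.} By \cref{obsthm}, $H$ is non-König--Egerváry, so $\alpha(H)+\mu(H)\le |V(H)|-1$. Therefore
\[
\alpha(H)\le |C|+|R_{\text{even}}(C)|+|R_{\text{odd}}(C)|-\frac{|C|-1}{2}-|R_{\text{odd}}(C)|-1=|R_{\text{even}}(C)|+\frac{|C|-1}{2}=|S|.
\]
Hence $S\in\Omega(H)$ and $S\subseteq D(G)$, which proves the lemma.
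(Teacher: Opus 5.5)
Your proof is correct and is the paper's argument in complementary form: the paper takes the vertex cover consisting of $A(G[R(C)])$ together with $\frac{|C|+1}{2}$ vertices covering $C$, checks that its size is $\mu(G[R(C)])+1$, and concludes from $\tau\ge\mu+1$ (non-K\"onig--Egerv\'ary) that it is a minimum cover. Your $S=R_{\text{even}}(C)\cup J$ is exactly the complement of that cover, and your bound $\alpha+\mu\le|V(H)|-1$ is the same inequality via $\alpha+\tau=|V(H)|$.

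The hedging in Step 3 is unnecessary. Since $C(H)=\emptyset$, \cref{ge} says every edge of $M$ either lies inside $C$ (exactly $\frac{|C|-1}{2}$ of them) or is one of the $|A(H)|$ edges matching $A(H)$ into $D(H)$. An $A(H)$-vertex matched into $C$ just covers the single vertex left exposed by the near-perfect matching of $C$, so the count is immediate.
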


\begin{proof}
	By \cref{obsthm}, $G^{\P}:=G[R(C)]$ is an almost bipartite
	non-König--Egerváry graph, hence $\mu(G)+1\Le\tau(G)$. By
	\cref{lovaszk}, the components of $G[D(G^{\P})]$ consist of isolated vertices
	and the odd cycle $C$. We then define the set $S=S^{\P}\u A(G)$,
	where $S^{\P}$ is a set of $\frac{\left|C\right|+1}{2}$ vertices
	of $C$ covering $C$. Then $S$ is a cover in $G^{\P}$.
	By \cref{ge}
	\[
	\mu(G)=\left|A\right|+\frac{\left|C\right|-1}{2}=\left|S\right|-1.
	\]
	\noindent Therefore, $S$ is a minimum cover of $G^{\P}$,
	and consequently $R(C)-S$ is a maximum independent set of $G^{\P}$
	contained in $D(G)$. 
\end{proof}

As a corollary of \cref{hola} and \cref{obsthm}, we have the additivity
of $\alpha$ in $R$-disjoint graphs, since we can construct an
independent set in $G$ by choosing in each $R(C)$ an independent set
in $D(G)$, and an arbitrary one in $B(G)$.

\begin{theorem}\label{alpha+}
	Let $G$ be an $R$-disjoint graph. Then
	\[
	\alpha\left(G\right)=\alpha\left(B(G)\right)+\underset{C}{\sm}\alpha\left(G[R(C)]\right),
	\]
	\noindent where the sum is over all odd cycles $C$ of
	the graph $G$.
\end{theorem}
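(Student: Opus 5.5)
The identity splits into two inequalities, and I will prove each one separately.

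\emph{Upper bound.} Let $S\in\Omega(G)$. The sets of the flower decomposition partition $V(G)$. Therefore
\[
|S| = |S\cap B(G)| + \sum_{C}|S\cap R(C)| .
\]
Each $S\cap R(C)$ is independent in $G[R(C)]$, and $S\cap B(G)$ is independent in $G[B(G)]$. Hence
\[
\alpha(G)\le \alpha(B(G))+\sum_C \alpha(G[R(C)]).
\]

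\emph{Lower bound.} I will build an independent set of $G$ of exactly this size. By \cref{hola}, for each odd cycle $C$ choose $S_C\in\Omega(G[R(C)])$ with $S_C\subseteq D(G)$. Let $T$ be any maximum independent set of $G[B(G)]$. Put
\[
S=T\cup\bigcup_C S_C .
\]
The pieces lie in disjoint parts, so $|S|$ equals the required sum. It remains to check that $S$ is independent in $G$. Inside each part this holds by choice, so only edges between different parts need attention.

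The key step is that no vertex of $S_C$ has a neighbour outside $R(C)$. Since $S_C\subseteq R(C)\cap D(G)=V(C)\cup R_{\text{even}}(C)$, this follows from the (unnamed) lemma after \cref{1s2}: for every $x\in V(C)\cup R_{\text{even}}(C)$ and every edge $xy$, we have $y\in R(C)$. I can confirm it independently. By \cref{tas2}, every vertex of $\partial(R(C))$ lies in $A(G)$, while $S_C\subseteq D(G)$ and $D(G)\cap A(G)=\emptyset$. Hence $S_C\cap\partial(R(C))=\emptyset$.

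Consequently no edge joins $S_C$ to $S_{C'}$ for $C\neq C'$, and no edge joins $S_C$ to $T$. So $S$ is independent and $\alpha(G)\ge|S|$, which completes the identity.

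The only real obstacle is the choice of $S_C$ inside $D(G)$, since an arbitrary maximum independent set of $G[R(C)]$ could contain boundary vertices that are adjacent to $B(G)$ or to another $R(C')$. \cref{hola} removes this obstacle. Apart from that, I only need to make sure that $D(G)$ here is the $D$ of the whole graph $G$, so that it is compatible with \cref{tas2}. \cref{obscor} guarantees this, because $D(G)\cap R(C)=D(G[R(C)])$.
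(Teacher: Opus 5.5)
Your proof is correct and takes the same route as the paper. The paper obtains additivity by choosing, via \cref{hola}, a maximum independent set of each $G[R(C)]$ inside $D(G)$ together with an arbitrary maximum independent set of $G[B(G)]$; the union is independent because vertices of $R(C)\cap D(G)$ have no neighbours outside $R(C)$. You simply make explicit the trivial upper bound and the independence check that the paper leaves implicit.
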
 

\begin{theorem}\label{inder}
	Let $G$ be an $R$-disjoint graph. Then
	\begin{align*}
		\textnormal{core}(G) & =\U_{C}\textnormal{core}(G[R(C)\u B(G)])\\
		& =\U_{C}\textnormal{ker}(G[R(C)\u B(G)]).
	\end{align*}
\end{theorem}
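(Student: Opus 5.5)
We prove the two equalities separately. Throughout, write $H_C:=G[R(C)\u B(G)]$ for each odd cycle $C$ of $G$.

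\textbf{Second equality.} This is immediate. By \cref{obsthm}, each $H_C$ is an almost bipartite non-König--Egerváry graph. Hence \cref{pp} gives $\textnormal{core}(H_C)=\ker(H_C)$, and taking the union over all $C$ yields the claim. All the work is therefore in the first equality.

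\textbf{A gluing principle.} By \cref{tas2}, every vertex of $\partial(R(C'))$ lies in $A(G)$. So a vertex of $R(C')\ii D(G)$ has all its neighbours inside $R(C')$. By \cref{hola}, for each odd cycle $C'$ we can fix $S_{C'}\in\Omega(G[R(C')])$ with $S_{C'}\s D(G)$. It follows that the union of any independent set of $G-R(C')$ with $S_{C'}$ is again independent. Two consequences follow.

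\begin{itemize}
\item $\alpha(H_C)=\alpha(B(G))+\alpha(G[R(C)])$. The upper bound is trivial. For the lower bound, take $S_C$ together with any maximum independent set of $G[B(G)]$.
\item Let $S\in\Omega(G)$. By \cref{alpha+}, each piece $S\ii R(C)$ and $S\ii B(G)$ must be maximum in its own induced subgraph; otherwise the total would fall below $\alpha(G)$. Hence $S\ii V(H_C)$ has size $\alpha(H_C)$, that is, $S\ii V(H_C)\in\Omega(H_C)$ for every $C$.
\end{itemize}

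\textbf{Inclusion $\supseteq$.} Let $v\in\textnormal{core}(H_C)$ and $S\in\Omega(G)$. By the second consequence above, $S\ii V(H_C)\in\Omega(H_C)$, so $v\in S$. Hence $v\in\textnormal{core}(G)$.

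\textbf{Inclusion $\subseteq$.} Let $v\notin\U_C\textnormal{core}(H_C)$. We build some $S\in\Omega(G)$ missing $v$.
\begin{itemize}
\item If $v\in R(C)$ for some $C$, pick $T\in\Omega(H_C)$ with $v\notin T$.
\item If $v\in B(G)$, pick any single $C$ and take $T\in\Omega(H_C)$ with $v\notin T$; this exists because $v\notin\textnormal{core}(H_C)$.
\end{itemize}
In both cases set $S:=T\u\U_{C'\neq C}S_{C'}$. By the gluing principle, $S$ is independent. Its size is $\alpha(B(G))+\sum_{C'}\alpha(G[R(C')])=\alpha(G)$ by \cref{alpha+}. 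Since $v\notin S$, we get $v\notin\textnormal{core}(G)$.

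\textbf{Main obstacle.} The one step needing care is the independence check in the gluing principle. It relies on two facts: \cref{tas2} places all boundary vertices of $R(C')$ in $A(G)$, and \cref{hola} lets us choose the pieces $S_{C'}$ inside $D(G)$. We must also confirm that every vertex of $G$ lies in exactly one of the sets $R(C')$ or in $B(G)$, so that the pieces cover $V(G)$ without overlap. This holds by the definition of the flower decomposition.
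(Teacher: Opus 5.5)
Your proof is correct and follows essentially the same route as the paper. The inclusion $\supseteq$ comes from $\alpha$-additivity (\cref{alpha+}) forcing every $S\in\Omega(G)$ to restrict to a maximum independent set of each $H_C$; the inclusion $\subseteq$ comes from extending a maximum independent set of $H_C$ by maximum independent sets of the other $G[R(C')]$ chosen inside $D(G)$ (\cref{hola} and \cref{tas2}); and the second equality is \cref{obsthm} combined with the almost bipartite result. The differences are cosmetic: you argue $\subseteq$ contrapositively, and you check explicitly that $\alpha(H_C)=\alpha(B(G))+\alpha(G[R(C)])$, which the paper uses without comment.
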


\begin{proof}
	By \cref{inder}, a maximum independent set of
	$G$ restricted to any set of the flower partition of $G$
	is a maximum independent set in the corresponding induced graph.
	Hence, it is immediate that 
	\[
	\U_{C}\textnormal{core}(G[R(C)\u B(G)])\s\textnormal{core}(G).
	\]
	\noindent Let $x\in\textnormal{core}(G)$, and without loss of generality
	suppose that $x\in R(C)\u B(G)$, for some odd cycle $C$ of
	$G$. Then $x\in\I_{S\in\Omega(G)}S$. Let $S\in\Omega(G[R(C)\u B(G)])$
	and $T(S)\in\Omega(G)$ be a set such that $T(S)\ii(R(C)\u B(G))=S$
	and $T(S)\ii R(C^{\P})\S D(G)$ for every odd cycle $C^{\P}$ distinct
	from $C$, which is possible by \cref{hola} and \cref{tas2}. Then,
	since $x\in R(C)\u B(G)$, we have
	\begin{align*}
		x & \in\left(\I_{S\in\Omega(G[R(C)\u B(G)])}T(S)\right)\ii\left(R(C)\u B(G)\right)\\
		& =\left(\I_{S\in\Omega(G[R(C)\u B(G)])}S\right)\\
		& =\textnormal{core}(G[R(C)\u B(G)]).
	\end{align*}
	\noindent As desired. The second equality follows immediately
	from \cref{obsthm} and \cref{levit}.
\end{proof}

\begin{lemma}
	Let $G$ be an $R$-disjoint graph. Let $H_{1},\dots,H_{k}$
	be the graphs corresponding to the sets of the flower partition. Let $I$ be a critical
	independent set in $G$, and let $I_{i}$ be a critical independent
	set in $H_{i}$, such that $I,I_{i}\s D(G)$ for each $i=1,\dots,k$.
	Then
	\begin{itemize}
		\item $\U_{i}I_{i}$ is a critical independent set in $G$, and 
		\item $I\ii V(H_{i})$ is a critical independent set in $H_{i}$,
		for $i=1,\dots,k$. 
	\end{itemize}
\end{lemma}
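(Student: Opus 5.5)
The core of the plan is to prove additivity of the critical difference,
\[
d(G)=\sum_{i=1}^{k} d(H_i),
\]
and then read off both items from it. Two facts do all the work. First, every vertex of the flower partition lying on its boundary is in $A(G)$ or $C(G)$, by \cref{tas2}; hence a set $X\subseteq D(G)$ has no vertex on the boundary of any $H_i$. Second, the neighborhood of such a set therefore splits as $N_G(X)=\bigcup_i N_{H_i}(X\cap V(H_i))$, a disjoint union.

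\textbf{Step 1 (splitting differences).} For any independent $X\subseteq D(G)$ I would check that
\[
d_G(X)=\sum_i d_{H_i}(X\cap V(H_i)).
\]
This holds because no vertex of $X$ has a neighbor outside its own part, so the neighborhoods inside different parts are disjoint and their union is $N_G(X)$. Moreover, each $X\cap V(H_i)$ is still independent, and it lies in $D(G)\cap V(H_i)$.

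\textbf{Step 2 (the inequality $d(G)\ge\sum_i d(H_i)$).} Take $X=\bigcup_i I_i$. This set is independent, since there are no edges between distinct $I_i$'s: any such edge would put an endpoint of some $I_i\subseteq D(G)$ on a part boundary, which is impossible. By Step 1 and because each $I_i$ is critical in $H_i$,
\[
d_G\Bigl(\bigcup_i I_i\Bigr)=\sum_i d(H_i).
\]
Here I use $d(H_i)=d_I(H_i)$.

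\textbf{Step 3 (the inequality $d(G)\le\sum_i d(H_i)$).} Take the given critical $I\subseteq D(G)$. By Step 1,
\[
d(G)=d_G(I)=\sum_i d_{H_i}(I\cap V(H_i))\le\sum_i d(H_i).
\]
Combining Steps 2 and 3 gives equality throughout. Then $\bigcup_i I_i$ attains $d(G)$, so it is critical in $G$, which is the first item. In the chain of Step 3 every summand must now attain its maximum, so $d_{H_i}(I\cap V(H_i))=d(H_i)$ for each $i$; that is, $I\cap V(H_i)$ is critical in $H_i$, which is the second item.

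\textbf{Main obstacle.} The delicate point is the boundary claim in Step 1, that $D(G)$ contains no vertex of $\partial(V(H_i))$. For the parts $R(C)$, \cref{tas2} gives $\partial(R(C))\subseteq A(G)$, which is disjoint from $D(G)$. For the part $B(G)$, \cref{tas2} gives $\partial(B(G))\subseteq A(G)\cup C(G)$, also disjoint from $D(G)$. The hypothesis $I,I_i\subseteq D(G)$ is exactly what makes the argument go through.

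One residual check: Step 2 uses $d(H_i)=d_I(H_i)$, which is the Zhang equality $d=d_I$ valid for every graph. This confirms that "critical independent in $H_i$" means attaining $d(H_i)$, so the equality argument in Step 3 is sound.
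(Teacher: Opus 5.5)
Your proof is correct and takes essentially the paper's route. Both arguments use \cref{tas2} to see that vertices of $D(G)$ have no neighbors outside their own part, so the difference of any subset of $D(G)$ splits additively over the flower partition, and both then compare $I$ with $\bigcup_i I_i$ term by term. Your version is somewhat more explicit: you check that $\bigcup_i I_i$ is independent, and you get the second item from equality in the chain rather than by the paper's contradiction argument.
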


\begin{proof}
	Since $I\s D(G)$, by \cref{tas2} 
	\begin{align*}
		\left|I\right|-\left|N(I)\right| & =\sm_{i}\left|I\ii V(H_{i})\right|-\left|N_{H_{i}}(I\ii V(H_{i}))\right|\\
		& \le\sm_{i}\left|I_{i}\right|-\left|N_{H_{i}}(I_{i})\right|\\
		& =\left|\U_{i}I_{i}\right|-\left|N\left(\U_{i}I_{i}\right)\right|,
	\end{align*}
	\noindent hence $\U_{i}I_{i}$ is a critical independent set
	in $G$. On the other hand, if for some $i$, $I\ii V(H_{i})$
	is not critical, then 
	\begin{align*}
		\left|I\right|-\left|N(I)\right| & =\sm_{i}\left|I\ii V(H_{i})\right|-\left|N_{H_{i}}(I\ii V(H_{i}))\right|\\
		& <\left(\sm_{j\neq i}\left|I\ii V(H_{j})\right|-\left|N_{H_{j}}(I\ii V(H_{j}))\right|\right)+\left|I_{i}\right|-\left|N_{H_{i}}(I_{i})\right|\\
		& =\left|\left(\U_{j\neq i}I_{j}\right)\u I_{i}\right|-\left|N\left(\left(\U_{j\neq i}I_{j}\right)\u I_{i}\right)\right|.
	\end{align*}
	\noindent This is absurd, since $I$ is critical in $G$. 
\end{proof}

As an immediate consequence, we have the additivity of the maximum
critical difference $d(G)=\max\{\left|X\right|-\left|N(X)\right|:X\s V(G)\}$.

\begin{corollary}
	Let $G$ be an $R$-disjoint graph. Then 
	\[
	d\left(G\right)=d\left(B(G)\right)+\underset{C}{\sm}d\left(G[R(C)]\right),
	\]
	\noindent where the sum is over all odd cycles $C$ of
	the graph $G$.
\end{corollary}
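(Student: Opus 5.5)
The corollary follows by combining the preceding lemma with the fact, recalled in the introduction, that $d(G)=d_I(G)$ for every graph \cite{zhang1990finding}. It therefore suffices to prove the additivity of $d_I$ over the flower decomposition. Write $H_1,\dots,H_k$ for the induced subgraphs $G[B(G)]$ and $G[R(C)]$, one $G[R(C)]$ for each odd cycle $C$. The target is
\[
d_I(G)=\sum_i d_I(H_i),
\]
after which $d(G)=d_I(G)$ and $d(H_i)=d_I(H_i)$ give the displayed formula.

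First, by \cref{indD} applied to each $H_i$, choose a critical independent set $I_i$ of $H_i$ with $I_i\subseteq D(H_i)$. By \cref{obscor}, $D(H_i)\subseteq D(G)$, so each $I_i\subseteq D(G)$. If $B(G)=\emptyset$, the corresponding term is simply zero.

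Second, use the computation in the proof of the preceding lemma. By \cref{tas2}, vertices of $D(G)$ lie off the boundaries $\partial(B(G))$ and $\partial(R(C))$, since boundary vertices are in $A(G)\cup C(G)$. Hence for a set $J\subseteq D(G)$ the neighbourhood splits as the disjoint union of the $N_{H_i}(J\cap V(H_i))$. Applied to $J=\bigcup_i I_i$ this gives
\[
d_G\Bigl(\bigcup_i I_i\Bigr)=\sum_i d_{H_i}(I_i)=\sum_i d_I(H_i).
\]
The first bullet of the lemma says $\bigcup_i I_i$ is critical in $G$, so the left-hand side equals $d_I(G)$. This proves the identity.

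The only potential obstacle is the existence of the sets $I_i\subseteq D(G)$ needed to meet the lemma's hypotheses; \cref{indD} together with \cref{obscor} supplies them. The lemma's hypothesis also requires a critical independent set $I\subseteq D(G)$ of $G$ itself, and \cref{indD} applied to $G$ provides one. Everything else is bookkeeping.
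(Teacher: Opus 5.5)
Your proposal is correct and follows the paper's route. The paper only calls the corollary an immediate consequence of the preceding lemma on critical independent sets. You make explicit the steps it leaves implicit: choosing $I_i\subseteq D(H_i)\subseteq D(G)$ via \cref{indD} and \cref{obscor}, splitting $N\bigl(\bigcup_i I_i\bigr)$ across the parts via \cref{tas2}, and applying $d=d_I$ to $G$ and to each $H_i$.
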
 

\begin{theorem}\label{inder2}
	Let $G$ be an $R$-disjoint graph. Then
	\begin{align*}
		\textnormal{ker}(G) & =\textnormal{ker}(G[B(G)])\u\U_{C}\textnormal{ker}(G[R(C)])=\U_{C}\textnormal{ker}(G[R(C)\u B(G)]).
	\end{align*}
\end{theorem}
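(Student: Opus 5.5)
We first prove $\textnormal{ker}(G)=\textnormal{ker}(G[B(G)])\u\U_{C}\textnormal{ker}(G[R(C)])$ and then derive the second equality from it. Write $H_{0}=G[B(G)]$ and $H_{C}=G[R(C)]$. The main tool is the preceding lemma on critical independent sets inside $D(G)$, together with \cref{indD}, \cref{obscor} and \cref{tas2}.

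The key preliminary step is to show that $\textnormal{ker}(G)$ is the intersection of the critical independent sets contained in $D(G)$, and likewise for each $H_{i}$ with respect to $D(H_{i})=D(G)\ii V(H_{i})$ (using \cref{obscor}). The procedure in the proof of \cref{indD} starts from an arbitrary critical independent set $I$ and only deletes vertices. It therefore yields a critical independent set $I'\s I$ with $I'\s D(G)$. Hence the intersection over all critical independent sets equals the intersection over those contained in $D(G)$. This is the step I expect to be the main obstacle. The proof of \cref{indD} is sketchy, so I would recheck it: when $|N_{I}(S)|>|S|$ for all nonempty $S\s N(I)$, Hall's theorem gives, for each $x\in I$, a matching of $N(I)$ into $I-x$. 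Extending it to a maximum matching of $G$ leaves $x$ unmatched, because every neighbour of $x$ lies in $N(I)$ and is already matched into $I-x$. The extension claim itself follows from the standard fact, via the preceding Larson lemma, that such matchings extend.

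Next, take critical independent sets $I\s D(G)$ of $G$ and $I_{i}\s D(G)$ of $H_{i}$. By the preceding lemma, $\U_{i}I_{i}$ is critical in $G$, so $\textnormal{ker}(G)\s\U_{i}I_{i}$ for every such choice. Letting each $I_{i}$ range independently and intersecting gives $\textnormal{ker}(G)\s\U_{i}\textnormal{ker}(H_{i})$. Conversely, $I\ii V(H_{i})$ is critical in $H_{i}$, hence contains $\textnormal{ker}(H_{i})$. So every critical $I\s D(G)$ contains $\U_{i}\textnormal{ker}(H_{i})$, and by the preliminary step $\U_{i}\textnormal{ker}(H_{i})\s\textnormal{ker}(G)$. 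Some care is needed because the $H_{i}$ are induced subgraphs while $N$ is computed in $G$. This is handled by \cref{tas2}: a vertex of $D(G)$ has no neighbours outside its own part, since boundary vertices of the parts lie in $A(G)\u C(G)$.

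For the second equality, apply the first equality to the graph $G_{C}:=G[R(C)\u B(G)]$. By \cref{obsthm}, $G_{C}$ is almost bipartite and non-König--Egerváry, with a unique odd cycle $C$. I would check that $G_{C}$ is $R$-disjoint with flower decomposition $\{R(C),B(G)\}$: flowers of $G$ with blossom $C$ lie inside $R(C)$, so $R_{G}(C)\s R_{G_{C}}(C)$. The reverse inclusion would follow from $\mu$-additivity, since maximum matchings of $G_{C}$ extend by maximum matchings of the other parts to maximum matchings of $G$. This gives $\textnormal{ker}(G_{C})=\textnormal{ker}(H_{0})\u\textnormal{ker}(H_{C})$. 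Taking the union over $C$ (with at least one odd cycle present) yields $\U_{C}\textnormal{ker}(G_{C})=\textnormal{ker}(H_{0})\u\U_{C}\textnormal{ker}(H_{C})$, which completes the proof.
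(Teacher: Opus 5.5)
Your proposal is correct and follows the paper's proof. As the paper's chain of equalities does, you reduce $\ker(G)$ to the intersection of the critical independent sets contained in $D(G)$ via the deletion procedure of \cref{indD}, then split along the flower decomposition using the preceding lemma in both directions. Your added check, via $\mu$-additivity, that $G[R(C)\cup B(G)]$ is itself $R$-disjoint with flower decomposition $\{R(C),B(G)\}$ justifies the final equality, which the paper only asserts.
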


\begin{proof}
	Let $X=\{S\s D(G):S\text{ is a critical independent set of \ensuremath{G}}\}$.
	By the proof of \cref{indD}, every critical independent set contains
	a critical independent set in $D(G)$. Therefore,
	\begin{align*}
		\textnormal{ker}(G) & =\I_{S\in X}S\\
		& =\I_{S\in X,S\s D(G)}S\\
		& =\U_{i}\I_{S\in X,S\s D(G)}S\ii V(H_{i})\\
		& =\U_{i}\textnormal{ker}(H_{i})\\
		& =\U_{C}\textnormal{ker}(G[R(C)\u B(G)]),
	\end{align*}
	\noindent where $H_{1},\dots,H_{k}$ are the graphs corresponding
	to the sets of the flower partition.
\end{proof} 

Hence, by \cref{inder} and \cref{inder2}, we obtain the following result.

\begin{theorem}\label{main1}
	If $G$ is an $R$-disjoint graph, then 
	\[
	\textnormal{ker}(G)=\textnormal{core}(G).
	\]
\end{theorem}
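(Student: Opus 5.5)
The statement follows by chaining \cref{inder} and \cref{inder2}. \cref{inder} expresses $\textnormal{core}(G)$ as $\U_{C}\textnormal{ker}(G[R(C)\u B(G)])$. \cref{inder2} expresses $\textnormal{ker}(G)$ as the same union. So the plan is to check that both theorems really give this identical right-hand side, and then compare.

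First, I verify the second equality in \cref{inder}, namely $\textnormal{core}(G[R(C)\u B(G)])=\textnormal{ker}(G[R(C)\u B(G)])$ for every odd cycle $C$. By the third item of \cref{obsthm}, each $G[R(C)\u B(G)]$ is an almost bipartite non-König--Egerváry graph. So \cref{pp} (equivalently the first item of \cref{levit}) applies directly and gives the equality. This yields
\[
\textnormal{core}(G)=\U_{C}\textnormal{core}(G[R(C)\u B(G)])=\U_{C}\textnormal{ker}(G[R(C)\u B(G)]).
\]

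Second, I use the last expression in \cref{inder2}:
\[
\textnormal{ker}(G)=\U_{C}\textnormal{ker}(G[R(C)\u B(G)]).
\]
The two right-hand sides coincide, hence $\textnormal{ker}(G)=\textnormal{core}(G)$.

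The main obstacle is not this final comparison but the inputs it relies on. The delicate point is the decomposition of $\textnormal{ker}(G)$ in \cref{inder2}, whose final step identifies $\U_i\textnormal{ker}(H_i)$ over the flower partition with $\U_C\textnormal{ker}(G[R(C)\u B(G)])$. This identification needs $\textnormal{ker}(G[R(C)\u B(G)])=\textnormal{ker}(G[R(C)])\u\textnormal{ker}(G[B(G)])$. To justify it, I would apply the lemma on restrictions of critical independent sets contained in $D(G)$ to the smaller $R$-disjoint graph $G[R(C)\u B(G)]$, which has flower partition $\{R(C),B(G)\}$. Two facts make this legitimate. By \cref{tas2}, no maximum matching uses edges across parts. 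By \cref{obscor}, the Gallai--Edmonds sets decompose accordingly. Together these ensure that $R(C)$ is still the reach set of $C$ inside $G[R(C)\u B(G)]$.

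Granting those earlier results, as we may, the proof is a two-line chain of equalities.
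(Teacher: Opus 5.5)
Your proposal is correct and matches the paper, which proves this theorem exactly by chaining \cref{inder}, with its second equality coming from \cref{obsthm} and \cref{levit}, and \cref{inder2}. Both express the relevant set as $\bigcup_{C}\textnormal{ker}(G[R(C)\cup B(G)])$, and your extra remark that $R(C)$ is still the reach set of $C$ inside $G[R(C)\cup B(G)]$ is a sensible justification of the last step of \cref{inder2}, which the paper leaves implicit.
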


\begin{theorem}\label{main2}
	If $G$ is an $R$-disjoint graph, then 
	\[
	\textnormal{corona}(G)\u N(\textnormal{core}(G))=V(G).
	\]
\end{theorem}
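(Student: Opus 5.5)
We reduce the statement to the almost bipartite case, \cref{levit}, using the decomposition results already proved. Fix $v\in V(G)$. The flower decomposition splits into two cases: either $v\in R(C)$ for some odd cycle $C$, or $v\in B(G)$. In both cases we work inside the subgraph $H_C:=G[R(C)\u B(G)]$. When $v\in B(G)$ we pick $C$ arbitrarily; this is possible because $G$ has at least one odd cycle. By \cref{obsthm}, $H_C$ is an almost bipartite non-König--Egerváry graph. So \cref{levit} gives
\[
\textnormal{corona}(H_C)\u N_{H_C}(\textnormal{core}(H_C))=V(H_C),
\]
and therefore $v\in\textnormal{corona}(H_C)$ or $v\in N_{H_C}(\textnormal{core}(H_C))$.

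\textbf{Neighbourhood case.} By \cref{inder}, $\textnormal{core}(H_C)\s\textnormal{core}(G)$. Since $H_C$ is an induced subgraph, $N_{H_C}(\textnormal{core}(H_C))\s N_G(\textnormal{core}(G))$. Hence $v\in N(\textnormal{core}(G))$ and we are done.

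\textbf{Corona case.} Here some $S\in\Omega(H_C)$ contains $v$, and we extend $S$ to a maximum independent set of $G$. For each odd cycle $C'\neq C$, \cref{hola} supplies $S_{C'}\in\Omega(G[R(C')])$ with $S_{C'}\s D(G)$. Put $T:=S\u\U_{C'\neq C}S_{C'}$; this is the same construction $T(S)$ used in the proof of \cref{inder}.
\begin{itemize}
\item \emph{Independence of $T$.} Any edge leaving $R(C')$ starts at a vertex of $\partial(R(C'))$, and such vertices lie in $A(G)$ by \cref{tas2}. Since $S_{C'}\s D(G)$ and $D(G)\ii A(G)=\emptyset$, no vertex of $S_{C'}$ has a neighbour outside $R(C')$. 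So there are no edges between $S_{C'}$ and $S$, nor between $S_{C'}$ and $S_{C''}$.
\item \emph{Size of $T$.} We need $\alpha(H_C)=\alpha(B(G))+\alpha(G[R(C)])$, which is the analogue of \cref{alpha+} for $H_C$. We obtain it the same way: take $S_C\in\Omega(G[R(C)])$ with $S_C\s D(G)$ from \cref{hola}. By the same boundary argument, $S_C$ has no neighbours in $B(G)$, so $S_C$ together with any maximum independent set of $G[B(G)]$ is independent in $H_C$. This gives $\ge$; $\le$ is trivial. Then \cref{alpha+} yields $|T|=\alpha(G)$, so $T\in\Omega(G)$ and $v\in\textnormal{corona}(G)$.
\end{itemize}

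The main obstacle is the corona case: it relies on the $D(G)$-choice from \cref{hola} together with \cref{tas2} to make the blocks glue without edges between them. The neighbourhood case is immediate.
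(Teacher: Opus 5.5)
Your proposal is correct and follows the paper's proof. Both reduce to $G[R(C)\cup B(G)]$, apply \cref{levit}, extend a maximum independent set in the corona case by $D(G)$-choices on the other blocks (via \cref{hola} and \cref{tas2}), and lift the neighbourhood case to $G$. The only minor difference is in the neighbourhood case: you use $\textnormal{core}(H_C)\subseteq\textnormal{core}(G)$ directly from \cref{inder}, whereas the paper goes through $\ker(G')\subseteq\ker(G)$ from \cref{inder2} (and implicitly \cref{main1}). You also spell out the independence and size checks for $T$ that the paper leaves implicit.
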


\begin{proof}
	Let $x\in V(G)$ and, without loss of generality, suppose
	that $x\in R(C)\u B(G)$ for some odd cycle $C$. Then, by
	\cref{inder} and \cref{inder2}, $x\in\textnormal{corona}(G^{\P})\u N_{G^{\P}}(\textnormal{core}(G))$,
	where $G^{\p}=G[R(C)\u B(G)]$. If $x\in\textnormal{corona}(G^{\P})$, then
	there exists $S\in\Omega(G^{\P})$ such that $x\in S$. Then $x\in S^{\P}\in\Omega(G)$,
	where $S^{\P}\ii\left(R(C)\u B(G)\right)=S$ and $S^{\P}-\left(R(C)\u B(G)\right)\s D(G)$,
	this set exists by \cref{tas2}. On the other hand, if $x\in N_{G^{\P}}(\textnormal{core}(G^{\P}))$,
	then by \cref{levit} $x\in N_{G^{\P}}(\textnormal{ker}(G^{\P}))$, but
	by \cref{inder2} $\textnormal{ker}(G^{\P})\s\textnormal{ker}(G)$, hence $x\in N_{G^{\P}}(\textnormal{ker}(G^{\P}))\s N_{G}(\textnormal{ker}(G))$.
\end{proof}

\begin{theorem}\label{main3}
	If $G$ is an $R$-disjoint graph with exactly
	$k$ disjoint odd cycles, then 
	\[
	\left|\textnormal{corona}(G)\right|+\left|\textnormal{core}(G)\right|=2\alpha(G)+k.
	\]
\end{theorem}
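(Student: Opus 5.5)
The plan is to reduce the identity to the almost bipartite case, applied block by block through the flower decomposition. Write $V(G)=B\cup R(C_1)\cup\dots\cup R(C_k)$ with $B=B(G)$, and set $H_0=G[B]$ and $H_i=G[R(C_i)]$. The two quantities I need to express blockwise are $\textnormal{corona}(G)$ and $\textnormal{core}(G)$.

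\textbf{Step 1 (core and corona split).} First I show that every $S\in\Omega(G)$ restricts to a maximum independent set on each block. This follows from \cref{alpha+}: if some restriction $S\cap V(H_i)$ were not maximum in $H_i$, then
\[
|S|=\sum_i |S\cap V(H_i)|<\alpha(H_0)+\sum_{i\ge1}\alpha(H_i)=\alpha(G),
\]
a contradiction. Conversely, every $T\in\Omega(H_i)$ extends to some $S\in\Omega(G)$. To build it, choose $T_j\in\Omega(H_j)$ with $T_j\subseteq D(G)$ for each odd-cycle block $j\neq i$, using \cref{hola}. On $B$, take any maximum independent set, or $T$ itself if $i=0$. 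The union is independent, because by \cref{tas2} the edges leaving $R(C_j)$ start at vertices of $A(G)$, which are disjoint from the chosen $T_j\subseteq D(G)$.

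The point needing care is the block $R(C_i)$ when $T$ is arbitrary rather than contained in $D(G)$. There I would instead work with $G_i':=G[R(C_i)\cup B]$, as in the proof of \cref{main2}: extend a maximum independent set of $G_i'$ by $D(G)$-sets on the other blocks. Adjacency from the other blocks into $G_i'$ again only uses $A(G)$-vertices of those blocks, so the union is independent. Consequently
\[
\textnormal{corona}(G)=\textnormal{corona}(H_0)\cup\bigcup_{i\ge1}\textnormal{corona}(H_i),\qquad \textnormal{core}(G)=\textnormal{core}(H_0)\cup\bigcup_{i\ge1}\textnormal{core}(H_i).
\]
Here the restriction statement gives the inclusion of $\textnormal{corona}(G)$ in the union and of the union of cores in $\textnormal{core}(G)$; the extension statement gives the reverse inclusions. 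The argument is the same as in \cref{inder}.

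\textbf{Step 2 (apply known formulas).} Each $H_i$ with $i\ge1$ is almost bipartite and non-König--Egerváry by \cref{obsthm}, so \cref{levit} gives
\[
|\textnormal{corona}(H_i)|+|\textnormal{core}(H_i)|=2\alpha(H_i)+1.
\]
For the bipartite, hence König--Egerváry, graph $H_0$ I will use the known identity $|\textnormal{corona}(H)|+|\textnormal{core}(H)|=2\alpha(H)$, valid for König--Egerváry graphs (Levit--Mandrescu). If I want to avoid citing it, I would derive it from $\ker=\textnormal{core}$ for bipartite graphs together with the Gallai--Edmonds structure: for bipartite $H$, $D(H)$ is independent and $\textnormal{core}(H)$ is matched into $V(H)\setminus\textnormal{corona}(H)$ by any maximum matching. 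Summing these identities over the disjoint blocks and applying \cref{alpha+} yields
\[
|\textnormal{corona}(G)|+|\textnormal{core}(G)|=2\alpha(G)+k.
\]

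\textbf{Main obstacle.} I expect the hardest part to be the extension statement in Step 1, namely that an arbitrary maximum independent set of one block (in particular of $B$) combines independently with sets from the other blocks. This needs edges between $B$ and the $R(C_j)$ to have their $R(C_j)$-endpoint in $A(G)$, which is \cref{tas2}(2). The first thing I would check is that this remains true for edges between $B$ and $R(C_i)$ when the chosen set on $R(C_i)$ is not contained in $D(G)$. If a conflict is possible there, I fall back to computing blockwise on $G_i'=G[R(C_i)\cup B]$ and apply inclusion–exclusion over the shared $B$.
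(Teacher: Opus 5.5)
There is a genuine gap in the route you present as primary, exactly at the point you flag. The split $\textnormal{corona}(G)=\textnormal{corona}(H_0)\cup\bigcup_{i\ge1}\textnormal{corona}(H_i)$ requires the following: for every $x\in\textnormal{corona}(H_i)$ with $i\ge1$, some $T\in\Omega(H_i)$ containing $x$ must be compatible with some maximum independent set of $H_0$. \cref{tas2} only places $\partial(R(C_i))$ inside $A(G)$. An arbitrary $T$ may contain such boundary vertices, and their neighbours in $B(G)$ may lie in $C(G)$, hence in maximum independent sets of $H_0$. Passing to $G_i'=G[R(C_i)\cup B(G)]$ does not resolve this. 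Extending members of $\Omega(G_i')$ by $D(G)$-sets on the other blocks only gives $\textnormal{corona}(G)\cap V(G_i')=\textnormal{corona}(G_i')$ and the analogous identity for core. So your ``Consequently'' does not follow. The core split alone could be taken from \cref{inder2} and \cref{main1}, but the corona split cannot.

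The main route can be repaired by a parity lemma you do not state: in each component of $G[C(G)\cap B(G)]$, all neighbours of $R(C_i)$ lie in one colour class. Otherwise, take the odd stem segments from $C_i$ to the two attaching vertices of $R_{\text{odd}}(C_i)$, an even arc of $C_i$, and an odd path inside $B(G)$. Together they form an odd closed walk containing an odd cycle different from $C_i$ inside $R(C_i)\cup B(G)$. That is impossible, since every odd cycle $C'$ satisfies $V(C')\subseteq R(C')$. Now choose the opposite colour class in each such component and add $D(G)\cap B(G)$. This gives a maximum independent set of $H_0$ with no neighbour in $R(C_i)$.

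Your inclusion--exclusion fallback avoids compatibility altogether and is a complete argument. Using only extensions by $D(G)$-sets (\cref{hola}, \cref{tas2}), you get three facts. First, $\textnormal{corona}(G)\cap V(G_i')=\textnormal{corona}(G_i')$, and likewise for core. Second, the traces of these sets on $B(G)$ are $\textnormal{corona}(H_0)$ and $\textnormal{core}(H_0)$. Third, $\alpha(G_i')=\alpha(H_i)+\alpha(H_0)$. Applying \cref{levit} to each $G_i'$ (via \cref{obsthm}) then gives
\[
|\textnormal{corona}(G)|+|\textnormal{core}(G)|=\sum_{i=1}^{k}\bigl(2\alpha(G_i')+1\bigr)-(k-1)\bigl(|\textnormal{corona}(H_0)|+|\textnormal{core}(H_0)|\bigr).
\]
This equals $2\alpha(G)+k$ by \cref{alpha+} and the K\"onig--Egerv\'ary identity for $H_0$.

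The paper instead merges $B(G)$ with $R_1$ and applies \cref{levit} to $G[B(G)\cup R_1]$ and to each $G[R_i]$, $i\ge2$. This avoids any identity for bipartite graphs. However, its equality $|\textnormal{corona}(G)|=|\textnormal{corona}(G[B(G)\cup R_1])|+\sum_{i\ge2}|\textnormal{corona}(G[R_i])|$ tacitly uses the same compatibility for $R_2,\dots,R_k$. So your fallback, made explicit, is the more self-contained of the two.

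Finally, in your optional derivation of the K\"onig--Egerv\'ary identity the matching goes the other way. It is $V(H)\setminus\textnormal{corona}(H)=A(H)$ that is matched into $\textnormal{core}(H)=D(H)$, and the $|V(H)|-2\mu(H)$ unmatched vertices lie in $\textnormal{core}(H)$.
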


\begin{proof}
	Let $B(G),R_{1},\dots,R_{k}$ be the flower partition of
	$G$. By \cref{inder}, \cref{inder2}, and \cref{tas2}, note that 
	\begin{align*}
		\left|\textnormal{corona}(G)\right| & =\left|\textnormal{corona}(G[B(G)\u R_{1}])\right|+\sm_{i=2}^{k}\left|\textnormal{corona}(G[R_{i}])\right|,\\
		\left|\textnormal{core}(G)\right|=\left|\textnormal{ker}(G)\right| & =\left|\textnormal{ker}(G[B(G)\u R_{1}])\right|+\sm_{i=2}^{k}\left|\textnormal{ker}(G[R_{i}])\right|.
	\end{align*}
	\noindent Therefore, by \cref{levit} and \cref{alpha+}
	\begin{align*}
		\left|\textnormal{corona}(G)\right|+\left|\textnormal{core}(G)\right| & =\left|\textnormal{corona}(G[B(G)\u R_{1}])\right|+\left|\textnormal{ker}(G[B(G)\u R_{1}])\right|+\\
		& +\sm_{i=2}^{k}\left(\left|\textnormal{corona}(G[R_{i}])\right|+\left|\textnormal{ker}(G[R_{i}])\right|\right)\\
		& =2\alpha(G[B(G)\u R_{1}])+1+\sm_{i=2}^{k}\left(2\alpha(G[R_{i}])+1\right)\\
		& =2\alpha(G)+k.
	\end{align*}
	\noindent As desired.
\end{proof}

Hence, from \cref{main1}, \cref{main2}, and \cref{main3}, we recover as a particular case
\cref{levit}.

\section{Consequences and open problems}\label{scon}
In \cite{levit2025almost} the following fact is proved.

\begin{proposition}\label{procj}
	If $G$ is an almost bipartite non-König--Egerváry
	graph, then every maximum matching in $G$ contains at least one edge
	belonging to its unique odd cycle.
\end{proposition}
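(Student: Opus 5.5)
We argue by contradiction. Let $G$ be an almost bipartite non-König--Egerváry graph with unique odd cycle $C$, and suppose some maximum matching $M$ of $G$ contains no edge of $C$. The aim is to build a second maximum matching that is larger than $M$, which is impossible.

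First, apply \cref{safe} to $M$: since $G$ is non-König--Egerváry, $G$ has an $M$-flower or an $M$-posy. A posy needs two vertex-disjoint blossoms, and each blossom is an odd cycle. Because $G$ has only one odd cycle, there is no posy, so $G$ has an $M$-flower. Its blossom is an odd cycle, hence it is $C$. By definition an $M$-blossom of length $2k+1$ contains $k$ edges of $M$. If $|C|\ge 3$ then $k\ge 1$, so $M$ contains an edge of $C$, contradicting the assumption.

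The step I expect to need most care is making sure the flower given by \cref{safe} really has blossom equal to the unique odd cycle, with exactly $(|C|-1)/2$ matched edges on it. This follows directly from the definition of $M$-blossom and from uniqueness of the odd cycle, since $G$ is simple and every odd cycle has length at least $3$.

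As an alternative route, if one prefers not to rely on the flower characterization, use \cref{ge} and \cref{lovaszk}. Every vertex of $C$ lies in $D(G)$: this holds if $G$ is non-König--Egerváry, which we get from the fact that $C$ must lie in a factor-critical component, and that in turn follows from the flower. The component of $G[D(G)]$ containing $C$ is then exactly $C$. By \cref{ge}, every maximum matching restricts to a near-perfect matching of this component, so it uses $(|C|-1)/2\ge 1$ edges of $C$.
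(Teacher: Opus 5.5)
Your main argument is correct and is essentially the paper's own route. It is the proof of \cref{kkk} specialized to a single odd cycle: \cref{safe} together with the impossibility of a posy forces $C$ to be an $M$-blossom, so $M$ has $(|C|-1)/2\ge 1$ edges on $C$, which in fact already gives the Levit--Mandrescu conjecture (the contradiction framing and the promise to build a larger matching are superfluous, since the argument is direct).
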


It is also shown that \cref{procj} is not true for almost
bipartite König--Egerváry graphs. Therefore, the following
conjecture is proposed:
\begin{conjecture}
	[\cite{levit2025almost}] If $G$ is an almost bipartite
	non-König--Egerváry graph, then every maximum matching in $G$ contains
	$\left\lfloor \frac{\left|V(C)\right|}{2}\right\rfloor $ edges belonging
	to its unique odd cycle $C$.
\end{conjecture}

Here we prove a similar result for $R$-disjoint graphs.

\begin{proposition}\label{kkk}
	If $G$ is an $R$-disjoint graph, then
	every maximum matching $M$ of $G$ contains $\left\lfloor \frac{\left|V(C)\right|}{2}\right\rfloor $
	edges from each odd cycle $C$ of $G$.
\end{proposition}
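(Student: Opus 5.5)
The plan is to use the Gallai--Edmonds structure of $G$ (\cref{ge}) together with \cref{lovaszk} and \cref{as2}. Let $C$ be an odd cycle of $G$. By \cref{as2}, $V(C)\subseteq D(G)$, so all vertices of $C$ lie in a single connected component $H$ of $G[D(G)]$, since $C$ is connected inside $G[D(G)]$. \cref{lovaszk} applies because $G$ is odd cycle disjoint, so $H$ is either an isolated vertex or an odd cycle. Since $H$ contains the at least three vertices of $C$, $H$ is an odd cycle containing $V(C)$. This forces $V(H)=V(C)$ and $E(H)=E(C)$, because a cycle containing all vertices of another cycle on the same vertex set must coincide with it, and $C$ is chordless.

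Now let $M$ be any maximum matching of $G$. By item 2 of \cref{ge}, the restriction of $M$ to the component $H=G[V(C)]$ is a near-perfect matching of $H$. It therefore consists of $\frac{|V(C)|-1}{2}=\left\lfloor \frac{|V(C)|}{2}\right\rfloor$ edges. All of these edges lie in $E(H)=E(C)$, so $M$ contains exactly $\left\lfloor \frac{|V(C)|}{2}\right\rfloor$ edges of $C$. This cannot be exceeded, since $C$ has no matching larger than that. As $C$ was arbitrary, the statement follows for every odd cycle.

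The only delicate point is the identification of the Gallai--Edmonds component containing $C$ with $C$ itself. This needs two inputs. The first is $V(C)\subseteq D(G)$, given by the first item of \cref{as2} using $R(C)\neq\emptyset$, so that a flower with blossom $C$ exists. The second is that the component is a cycle and not something larger, given by \cref{lovaszk}. One must also make sure that ``restriction of $M$ to $G_i$ is near-perfect'' in \cref{ge} refers to edges of $M$ inside the induced subgraph $G[V(G_i)]$. Since $G[V(C)]=C$ by chordlessness, this is consistent.
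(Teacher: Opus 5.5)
Your argument is correct, and it takes a different route from the paper.

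The paper works inside the flower decomposition. By \cref{tas2} no edge of $M$ leaves $R(C)$, so $M'=M\cap E(G[R(C)])$ is a maximum matching of $G[R(C)]$. By \cref{obsthm} that graph is almost bipartite and non-K\"onig--Egerv\'ary. Sterboul's characterization (\cref{safe}) then yields an $M'$-flower or an $M'$-posy. A posy is impossible with only one odd cycle, so $C$ is an $M'$-blossom and carries $\lfloor |V(C)|/2\rfloor$ edges of $M$.

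You instead apply Gallai--Edmonds to $G$ itself. The easy first item of \cref{as2} gives $V(C)\subseteq D(G)$. \cref{lovaszk} then forces the component of $G[D(G)]$ containing $C$ to be a cycle graph, hence equal to $C$. (Your phrasing of this step is a bit loose; the clean reason is that a cycle graph contains no cycle other than itself.) Item 2 of \cref{ge} then makes every maximum matching near-perfect on that component.

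Your route is shorter and bypasses both the case analysis of \cref{tas2} and Sterboul's theorem. It uses only that $G$ is odd cycle disjoint and that $C$ is the blossom of at least one flower. So it proves the statement in that wider setting. Combined with \cref{safe} to produce one flower, it also gives the Levit--Mandrescu conjecture without passing through the flower decomposition.

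The paper's route stays within its decomposition framework and gives slightly more. By \cref{tas2}, the root of the $M'$-flower found in $G[R(C)]$ is also $M$-unsaturated in $G$. Hence every maximum matching $M$ of $G$ admits an actual $M$-flower with blossom $C$, not merely $\lfloor |V(C)|/2\rfloor$ matched edges on $C$.
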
 

The proof of \cref{kkk} is essentially a direct application
of \cref{safe}, \cref{obsthm}, and \cref{tas2}.

\begin{proof}
	Let $M$ be a maximum matching of $G$. By \cref{tas2},
	$M^{\p}=M\ii E(G[R(C)])$ is a maximum matching of $G^{\P}=G[R(C)]$.
	But by \cref{obsthm}, $G^{\p}$ is an almost bipartite non-König--Egerváry
	graph. Since $G^{\P}$ has a unique cycle, it cannot have
	an $M^{\P}$-posy. Then, by \cref{safe}, there exists an $M^{\P}$-flower
	$F$ with $M^{\P}$-blossom $C$ in $G^{\P}$. Therefore, $C$ has
	$\left\lfloor \frac{\left|V(C)\right|}{2}\right\rfloor $ edges
	of $M^{\P}$, which are in particular edges of $M$.
\end{proof} 

To conclude the paper, we present some conjectures and open problems
related to this work.

\begin{conjecture}
	The determinant of the adjacency matrix $A(G)$
	of an $R$-disjoint graph admits a factorization (Schur-type functions,
	see \cite{h2,s3}) in terms of the flower decomposition $B(G),R_{1},\dots,R_{k}$, that is,
	\[
	\det A(G)=\det A(G[B(G)])\prod_{i=1}^{k}\det A(G[R_{i}]).
	\]
\end{conjecture}

\begin{problem}
	Characterize graphs satisfying $\left|\textnormal{corona}(G)\right|+\left|\textnormal{core}(G)\right|=2\alpha(G)+k.$ 
\end{problem}

\begin{problem}
	[\cite{levit2025almost}] Characterize graphs satisfying
	$\textnormal{ker}(G)=\textnormal{core}(G).$ 
\end{problem}

\begin{problem}
	[\cite{levit2025almost}] Characterize graphs satisfying
	$\textnormal{corona}(G)\u N(\textnormal{core}(G))=V(G).$ 
\end{problem}

\begin{problem}
	The null space of $A(G)$ of an $R$-disjoint graph
	decomposes orthogonally according to the flower decomposition.
\end{problem}

\begin{problem}
	Characterize $R$-disjoint graphs.
\end{problem}

\section*{Acknowledgments}

This work was partially supported by Universidad Nacional de San Luis (Argentina), PROICO 03-0723, MATH AmSud, grant 22-MATH-02, Agencia I+D+i (Argentina), grants PICT-2020-Serie A-00549 and PICT-2021-CAT-II-00105, CONICET (Argentina) grant PIP 11220220100068CO.

\section*{Declaration of generative AI and AI-assisted technologies in the writing process}
During the preparation of this work the authors used ChatGPT-3.5 in order to improve the grammar of several paragraphs of the text. After using this service, the authors reviewed and edited the content as needed and take full responsibility for the content of the publication.

\section*{Data availability}

Data sharing not applicable to this article as no datasets were generated or analyzed during the current study.

\section*{Declarations}

\noindent\textbf{Conflict of interest} \ The authors declare that they have no conflict of interest.

\bibliographystyle{apalike}

\bibliography{TAGcitasV2025}

\end{document}